\definecolor{aliceblue}{rgb}{0.9, 0.95, 1.0}
\newcommand{\pslc}{{\mathrm{PSL}_2 (\mathbb{C})}}
\newcommand{\pslnc}{{\mathrm{PSL}_n (\mathbb{C})}}
\newcommand{\pslnr}{{\mathrm{PSL}_n (\mathbb{R})}}
\newcommand{\pslr}{{\mathrm{PSL}_2 (\mathbb{R})}}
\newcommand\HH{{\mathbb H}}
\newcommand{\cp}{\mathbb{C}\mathrm{P}^1}
\newtheorem{theorem}{Theorem}[section]
\newtheorem{prop}[theorem]{Proposition}
\newtheorem{cor}[theorem]{Corollary}
\newtheorem{conj}[theorem]{Conjecture}
\newtheorem{thm}{Theorem}[section]
\newtheorem{lem}[thm]{Lemma}
\theoremstyle{definition}
\newtheorem{defn}[thm]{Definition}
\title[Dominating surface-group representations into $\pslc$]{Dominating surface-group representations into $\pslc$  in the relative representation variety}
\author{Subhojoy Gupta}
\address{Department of Mathematics, Indian Institute of Science,
Bangalore, India}
\email{subhojoy@iisc.ac.in}
\author{Weixu Su}
\address{Fudan University, Shanghai, China}
\email{suwx@fudan.edu.cn}
\begin{document}

\begin{abstract}
Let $\rho$ be a representation of the fundamental group of a punctured surface into $\pslc$ that is not Fuchsian. We prove that there exists a Fuchsian representation that strictly dominates $\rho$ in the simple length spectrum, and preserves the boundary lengths. This extends a result of Gueritaud-Kassel-Wolff to the case of $\pslc$-representations. Our proof involves straightening the pleated plane in $\mathbb{H}^3$ determined by the Fock-Goncharov coordinates of a framed representation, and applying strip-deformations.
\end{abstract}

\maketitle

\section{Introduction}

Let $S_{g,k}$ be an oriented surface of genus $g\geq 0$ and $k\geq 1$ labelled punctures $p_1,p_2,\ldots p_k$, with negative Euler characteristic; let $\Pi$ denote its fundamental group.

Given a representation $\rho: \Pi \to \pslc$, we define the $\rho$-\textit{length} of a closed curve $\gamma \in  \Pi$ to  be the translation length  of $\rho(\gamma)$, that is, $l_\rho(\gamma) = \inf\limits_{x\in \mathbb{H}^3} d_{\mathbb{H}^3}(\rho(\gamma)\cdot x,x).$ This is determined by the trace of $\rho(\gamma)$, since $\text{tr}^2(\rho(\gamma)) = 4\cosh^2(l_\rho(\gamma)/2)$.  Note that the translation length is positive if $\rho(\gamma)$ is loxodromic and zero if $\rho(\gamma)$ is parabolic or elliptic. Moreover, if $\rho$ is  \textit{Fuchsian}, that is, can be conjugated to a discrete and faithful representation into $\pslr$, then $l_\rho(\gamma)$ coincides with the length of the geodesic representative of $\gamma$ on the hyperbolic surface $\HH^2/\Gamma$, where $\Gamma$ is the image of the conjugated representation.  

For a fixed $k$-tuple $\mathcal{L} = (l_1,l_2,\ldots l_k) \in \mathbb{R}_{\geq 0}^k$, the \textit{relative representation variety} for the surface-group $\Pi$ is the space of representations
\begin{equation*}
\text{Hom}(\Pi, \mathcal{L}) = \{ \rho:\Pi \to \pslc \ \vert\ l_\rho(\gamma_i) = l_i \text{ where } \gamma_i \text{ is the loop around } p_i \}
\end{equation*}
where note that each \textit{boundary length} is fixed, and prescribed by $\mathcal{L}$.

A Fuchsian representation $j \in \text{Hom}(\Pi, \mathcal{L})$ is said to \textit{dominate} a representation $\rho\in  \text{Hom}(\Pi, \mathcal{L})$ if
\begin{equation}\label{supl}
\sup\limits_{\gamma} \frac{l_\rho(\gamma)}{l_j(\gamma)} \leq 1
\end{equation}
where $\gamma$ varies over all non-peripheral essential simple closed curves on $S_{g,k}$.  Moreover, $j$ is said to \textit{strictly dominate} $\rho$ if the inequality in \eqref{supl} is strict. Note that these can also described as domination and strict domination \textit{in the simple length spectrum} --  \S2 for a discussion about alternative definitions. It follows from the work in \cite{Thurston-Minimal} that a Fuchsian representation cannot have a strictly dominating Fuchsian representation in the same relative representation variety.

In this note we prove:

\begin{thm}\label{thm1} Let $\mathcal{L} = (l_1,l_2,\ldots l_k) \in \mathbb{R}_{\geq 0}^k$. For any  $\rho \in  \text{Hom}(\Pi, \mathcal{L})$ that is not Fuchsian,  there exists a Fuchsian representation $j \in  \text{Hom}(\Pi, \mathcal{L})$ that strictly dominates $\rho$.
\end{thm}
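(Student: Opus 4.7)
The strategy I would pursue has two stages: first, use Fock--Goncharov coordinates of a framed version of $\rho$ to construct a Fuchsian representation $j_0 \in \text{Hom}(\Pi, \mathcal{L})$ that weakly dominates $\rho$; then apply strip-deformations to $j_0$ to upgrade weak to strict uniform domination while preserving the boundary lengths.

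For the first stage, fix an ideal triangulation $\tau$ of $S_{g,k}$ with vertices at the punctures, and equip $\rho$ with a framing, i.e.\ a $\rho$-equivariant assignment of flags to the lifts of the punctures; each peripheral monodromy admits an invariant flag, so such a framing exists. The framing produces Fock--Goncharov $X$-coordinates on the edges of $\tau$, or equivalently additive complex shear coordinates $z_e = s_e + i\theta_e$, together with a $\rho$-equivariant pleated map $\phi\colon \tilde S \to \mathbb{H}^3$ sending each ideal triangle of $\tilde\tau$ to an ideal triangle in $\mathbb{H}^3$, with real shear $s_e$ and dihedral bending angle $\theta_e$ at each edge $e$. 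Let $j_0$ be the Fuchsian representation obtained by \emph{straightening} --- setting each $\theta_e$ to zero --- so that its developing map is a totally geodesic embedding of $\tilde S$ into an $\mathbb{H}^2 \subset \mathbb{H}^3$, equipped with the same intrinsic hyperbolic structure as the pleated surface. For any simple closed curve $\gamma$, the $j_0$-length equals the intrinsic geodesic length on the pleated surface, while $l_\rho(\gamma)$, being the $\mathbb{H}^3$-translation length, is at most this intrinsic length since $\mathbb{H}^3$-geodesics are never longer than paths on the pleated plane; hence $l_\rho(\gamma) \leq l_{j_0}(\gamma)$. A direct trace computation in the $X$-coordinates shows that the complex length of $\rho(\gamma_i)$ has real part depending only on the real shears $s_e$ at edges meeting $p_i$, matching $l_{j_0}(\gamma_i)$; therefore $j_0$ preserves boundary lengths and lies in $\text{Hom}(\Pi,\mathcal{L})$. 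Since $\rho$ is not Fuchsian, some $\theta_e \neq 0$, and any simple closed curve crossing $e$ essentially then satisfies the strict inequality $l_\rho(\gamma) < l_{j_0}(\gamma)$ via a genuine $\mathbb{H}^3$-shortcut at the bend.

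For the second stage, I would apply a strip-deformation to $j_0$ along a collection of non-peripheral simple closed geodesics filling $S_{g,k}$; such strips do not affect peripheral translation lengths, since peripheral curves have zero essential intersection with non-peripheral ones, so the resulting Fuchsian representation $j$ remains in $\text{Hom}(\Pi,\mathcal{L})$. Each strip lengthens every transversely intersecting simple closed curve by a positive amount proportional to the strip width and the intersection number, and the filling condition guarantees that every non-peripheral simple closed curve crosses at least one strip. For sufficiently small widths the resulting $j$ satisfies $\sup_\gamma l_\rho(\gamma)/l_j(\gamma) < 1$, giving strict domination.

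The main obstacle is the uniformity in this last step: converting per-curve strict inequalities produced by individual bent edges into a uniform gap in the supremum requires ruling out sequences $\gamma_n$ with $l_\rho(\gamma_n)/l_{j_0}(\gamma_n) \to 1$. I would handle this by extending the length functions continuously to the space of measured laminations and using compactness of the projectivization $\mathcal{PML}(S_{g,k})$: for the Fuchsian $j_0$ this is Thurston-standard, and for $\rho$ one can work with the intrinsic pleated-surface length function, which continuously dominates $l_\rho$ on $\mathcal{ML}$. A definite projective gap on $\mathcal{PML}$ then translates, via the standard strip-deformation length formulas, into an admissible window of strip widths producing uniform strict domination while exactly preserving the prescribed boundary data $\mathcal{L}$.
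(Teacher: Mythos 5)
Your first stage is essentially the paper's argument (pleated plane from Fock--Goncharov coordinates, straightening, boundary lengths preserved, $1$-Lipschitz comparison), but you have skipped a case where it simply does not run: \emph{degenerate} representations. If $\rho$ is co-axial (its image preserves two points of $\cp$) or has a global fixed point with parabolic/identity peripherals, then every equivariant framing is degenerate, the cross-ratios vanish or are undefined for \emph{every} ideal triangulation, and the ``pleated'' triangles collapse; Allegretti--Bridgeland's existence of a good triangulation requires non-degeneracy. The paper must treat these representations by a completely separate argument (trivially when all peripherals are parabolic, and via Sagman's harmonic-maps results in the co-axial case), so ``each peripheral monodromy admits an invariant flag, hence a framing exists'' is not enough.

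The second stage contains a more serious error. A strip deformation inserts a strip along an \emph{arc} (a bi-infinite geodesic exiting funnel ends); it cannot be performed along a closed geodesic, because there is no hyperbolic annulus with two closed geodesic boundary circles, so ``inserting strips along a filling system of non-peripheral simple closed geodesics'' is not a hyperbolic operation. With genuine strip deformations the arcs must terminate on boundary components or punctures, and then the boundary lengths increase, ejecting $j$ from $\text{Hom}(\Pi,\mathcal{L})$; indeed, by Thurston one should not expect to lengthen all curves while freezing boundary data, and the paper's Remark after Proposition \ref{inc} notes that some arc lengths must decrease. The paper circumvents exactly this: it cuts $\widehat{S}$ along the convex cores $\Sigma_i$ of the non-simply-connected complementary components of the bending lamination $\lambda$, performs strips in the outer piece only along arcs joining the resulting \emph{internal} (``positive'') boundary circles so that the original boundary lengths are untouched, strip-deforms each $\Sigma_i$ separately so its boundary lengths grow to match, reglues, and absorbs the slight shortening of curves crossing the decomposition using the uniform gap $\beta<1$ already gained from the bending (Proposition \ref{prop1}). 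Note also that this uniform gap is obtained there by hyperbolic trigonometry plus the collar lemma, not by compactness of $\mathcal{PML}$; your compactness argument would additionally require knowing that $l_\rho$ (or a controlled proxy) extends continuously to measured laminations, which is not automatic for non-discrete $\rho$ and is left unaddressed in your sketch.
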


Theorem \ref{thm1} is inspired by the work of  Gueritaud-Kassel-Wolff in \cite{GKW} who used ``folded" hyperbolic surfaces to prove a strict domination result in the complete length spectrum, for \textit{closed} surface-group representations into $\pslnr$. Around the same time, Deroin-Tholozan (\cite{DerThol}) proved a more general domination result, for representations of a closed surface-group into the isometry group of smooth Riemannian $CAT(-1)$ spaces, using the theory of harmonic maps.  Their result was extended to isometry groups of $CAT(-1)$  metric spaces in \cite{DMSV}, and more recently,  to the case of punctured surfaces by Sagman in  \cite{Sagman}.  In particular, Theorem \ref{thm1} is implicit in \cite{Sagman}; indeed, we rely on his work in the the case when $\rho$ is a degenerate and co-axial representation (see Definition \ref{degen}). In the non-degenerate case, however,  our proof of Theorem \ref{thm1} avoids harmonic maps, and relies instead on the pleated-surface interpretation of the Fock-Goncharov coordinates of a framed  representation into $\pslc$, as exploited in \cite{GM2} and \cite{GupGKM}.  \\

Our result also improves on Theorem 1.2 of  \cite{Whang}. Indeed, it is known that there is a Bers' constant  $B(g,k,L)>0$ for  any hyperbolic surface with geodesic boundary having lengths above  bounded by $L$  (see \cite{Matelski}). It follows directly from Theorem \ref{thm1} that

\begin{cor}\label{cor-thm1}
Let $\rho \in \text{Hom}(\Pi,  \mathcal{L})$  be a representation such that the length of each peripheral curve is bounded
above by $L$. Then there exists a pants decomposition of $S_{g,k}$ such that the $\rho$-lengths  of the pants curves are at most $B(g,k,L)$.
\end{cor}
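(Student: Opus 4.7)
The plan is to combine Theorem \ref{thm1} with the Bers pants decomposition theorem for finite-area hyperbolic surfaces. Since Theorem \ref{thm1} does all the heavy lifting, the corollary reduces to a short case analysis.

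First I would dispose of the case in which $\rho$ is already Fuchsian. In that situation $\rho$ is conjugate to a discrete faithful representation into $\pslr$, and the quotient hyperbolic surface $\HH^2/\rho(\Pi)$ has peripheral geodesic (or cusp) lengths exactly $l_1,\ldots,l_k$, each bounded by $L$. The Bers theorem cited in \cite{Matelski} then supplies a pants decomposition of $S_{g,k}$ in which every pants curve has hyperbolic length, hence $\rho$-length, at most $B(g,k,L)$.

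When $\rho$ is not Fuchsian, I would apply Theorem \ref{thm1} to produce a Fuchsian representation $j\in \text{Hom}(\Pi,\mathcal{L})$ that strictly dominates $\rho$. Because $j$ lies in the same relative representation variety, the hyperbolic surface associated to $j$ also has peripheral lengths $l_1,\ldots,l_k$, all bounded by $L$, so the Bers theorem again furnishes a pants decomposition in which every pants curve $\gamma$ satisfies $l_j(\gamma)\leq B(g,k,L)$. Each such $\gamma$ is a non-peripheral essential simple closed curve, so the domination inequality \eqref{supl} gives
\[
l_\rho(\gamma)\;\leq\;l_j(\gamma)\;\leq\;B(g,k,L),
\]
yielding the corollary.

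I do not foresee any real obstacle: the entire content lies in Theorem \ref{thm1}, and the reduction to Bers' theorem is immediate once a dominating Fuchsian representation with the prescribed peripheral lengths is available. The only point meriting a brief check is that the Bers constant can be taken to depend only on the topological type $(g,k)$ and on the upper bound $L$ on peripheral lengths, allowing the case when some $l_i=0$ and the corresponding peripheral is a cusp; this is precisely the form of the classical statement invoked in \cite{Matelski}.
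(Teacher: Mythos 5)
Your proposal is correct and matches the paper's intended argument: the corollary is stated as following directly from Theorem \ref{thm1} together with the Bers constant for hyperbolic surfaces with boundary lengths bounded by $L$, exactly the reduction you give. Your separate treatment of the case where $\rho$ is already Fuchsian (which Theorem \ref{thm1} excludes by hypothesis) is a sensible small addition, not a different route.
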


We expect that Theorem \ref{thm1} and Corollary \ref{cor-thm1} would be helpful in the study of the relative representation variety. Moreover, we hope that our techniques can be extended to proving analogous results for representations of punctured surface-groups into other complex Lie groups $G$, like $\text{PSL}_n(\mathbb{C})$ for $n>2$,  since we do have Fock-Goncharov coordinates for such representations (see \cite{FG}). For any such representation, one can define domination exactly as in \eqref{supl}, where the $\rho$-length of $\gamma$ is defined as the translation-length of $\rho(\gamma)$ in the symmetric space $\text{PSL}_n(\mathbb{C})/\text{PSU}_n$ equipped with a suitable invariant metric.  From the work in \cite{FG}, when the Fock-Goncharov coordinates are real and positive, they determine a Hitchin representation with image in $\text{PSL}_n(\mathbb{R})$.  We conjecture:

\begin{conj} A generic representation $\rho:\Pi \to \pslnc$ has a strictly dominating Hitchin representation $j:\Pi \to \pslnr$, in the sense of \eqref{supl}, in the  same relative representation variety.
\end{conj}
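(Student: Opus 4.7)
The plan is to mirror the strategy of Theorem \ref{thm1}, with the Fock--Goncharov $\mathcal{X}$-coordinates for framed $\pslnc$-representations playing the role that the edge-shears and triangle-invariants played in the $\pslc$ case. Fix an ideal triangulation $\tau$ of $S_{g,k}$ and, after a generic choice of a framing $\beta$ at the punctures, let $\mathbf{z}(\rho,\beta,\tau) \in (\C^*)^N$ denote the tuple of Fock--Goncharov coordinates of the resulting framed lift, consisting of the edge-invariants and triangle-invariants attached to each edge $e$ and triangle $T$ of $\tau$ (see \cite{FG}). Its componentwise modulus $|\mathbf{z}(\rho,\beta,\tau)| \in \reals_{>0}^N$ lies in the totally positive locus, and by \cite{FG} determines a Hitchin representation $j : \Pi \to \pslnr$. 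This $j$ is the candidate dominating representation.

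The first step is to verify that $j$ lies in the prescribed relative representation variety $\Hom(\Pi,\mathcal{L})$. The snake-move calculus of \cite{FG} expresses the conjugacy class of each boundary monodromy $\rho(\gamma_i)$ as a monomial in the coordinates $\mathbf{z}$ attached to the edges and triangles incident to $p_i$; in particular each eigenvalue of $\rho(\gamma_i)$ is such a monomial. Passing to moduli therefore replaces each eigenvalue by its modulus, which preserves the Cartan projection of $\rho(\gamma_i)$ --- and hence its translation length in $\pslnc/\mathrm{PSU}_n$ --- and sends unipotent boundary monodromies to unipotent ones. This would place $j$ in the same $\Hom(\Pi,\mathcal{L})$ as $\rho$.

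Weak domination would follow from a higher-rank analogue of the pleated-plane picture used in \cite{GupGKM} and in the proof of Theorem \ref{thm1}. To a framed $\pslnc$-representation one associates a piecewise-flat $\Pi$-equivariant map from the universal cover $\widetilde{S}_{g,k}$ to $\pslnc/\mathrm{PSU}_n$, sending each ideal triangle of $\widetilde{\tau}$ into an apartment determined by the corresponding triple of flags, with gluings along edges governed by the edge-coordinates. The passage $\mathbf{z}(\rho) \mapsto |\mathbf{z}(\rho)|$ should be realised as a straightening of this map into the apartment of the positive real Cartan subalgebra, and an apartment-by-apartment comparison, using the convexity of the distance function on the symmetric space, would yield $l_\rho(\gamma) \leq l_j(\gamma)$ for every non-peripheral simple closed curve $\gamma$. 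Strict domination would then be produced, as in Theorem \ref{thm1}, by an infinitesimal strip-deformation of $j$ along a simple arc witnessing a non-real coordinate, carried out inside the Hitchin component so as to preserve the peripheral Cartan projections, in the spirit of \cite{GKW}.

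The main obstacle is the higher-rank pleated surface. For $n=2$ it is a genuinely bent totally geodesic plane in $\HH^3$ whose straightening behaviour is controlled by elementary hyperbolic trigonometry; for $n \geq 3$ the target symmetric space has no codimension-one flats, and the correct replacement --- whether an equivariant map into an asymptotic Euclidean building, a maximal-flat singular surface in $\pslnc/\mathrm{PSU}_n$, or a harmonic map in the spirit of \cite{Sagman} and \cite{DerThol} --- is considerably more delicate, and even the assertion that ``taking moduli shortens translation length along every simple loop'' (rather than just along edges of $\tau$) is not automatic from the triangle-by-triangle picture. One further expects, as in Theorem \ref{thm1}, that a separate harmonic-map argument will be required to cover the degenerate representations in which no generic framing exists.
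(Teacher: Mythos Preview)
The statement you are addressing is labelled a \emph{Conjecture} in the paper and is not proved there; the paper only establishes the $n=2$ case (Theorem~\ref{thm1}) and explicitly leaves the $\pslnc$ statement open. So there is no ``paper's own proof'' to compare your attempt against.

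Your proposal is, by your own admission, not a proof either. It is a programme that extrapolates the $n=2$ strategy: replace the Fock--Goncharov $\mathcal{X}$-coordinates by their moduli to land in the positive locus, check the peripheral eigenvalue monomials to stay in the same relative variety, and then hope for a straightening inequality in the symmetric space. This is a reasonable outline and is presumably close to what motivated the authors to state the conjecture, but the two load-bearing steps remain genuinely open, and you correctly flag them. First, the inequality $l_\rho(\gamma)\le l_j(\gamma)$ for \emph{every} non-peripheral simple closed curve is not a consequence of convexity of the distance function alone; in the $n=2$ case it comes from the existence of an actual $1$-Lipschitz $(j,\rho)$-equivariant map (the straightened pleated plane), and for $n\ge 3$ no analogue of this map has been constructed --- your ``piecewise-flat map into apartments'' is a heuristic, not an object known to exist with the required Lipschitz property. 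Second, the passage from domination to \emph{strict} domination via strip-deformations inside the Hitchin component, while preserving the full peripheral Cartan projection (not just a single length), has no established analogue for $n\ge 3$.

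In short: what you have written is a plausible research plan aligned with the paper's viewpoint, but it does not close the conjecture, and the paper does not claim to either.
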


See \cite{DaiLi} for a closely related ``Metric Domination Conjecture" in the case of a closed surface. \\

\smallskip

 \textbf{Acknowledgements.}  This paper was conceived during SG's visit to Fudan University in June 2019; he is also grateful for their hospitality and support. SG also thanks the SERB, DST (Grant no. MT/2017/000706), and the UGC Center for Advanced Studies grant for their support.  W. Su is partially supported by NSFC No: 11671092 and No: 11631010. We are grateful to Nathaniel Sagman for his comments on an earlier version of this paper, and for pointing out how to handle the degenerate and co-axial case in \S3.2.

\section{Preliminaries}

\subsection{Framed and non-degenerate representations}
Fix a choice of a finite-area hyperbolic metric  on $S={S}_{g,k}$, such that the punctures are cusps.  Passing to the universal cover $\widetilde{S} \cong \HH^2$, let the Farey set ${F}_\infty$ be the points in the ideal boundary that are the lifts of the punctures. Note that ${F}_\infty$ is equipped with an action of the surface-group $\Pi = \pi_1(S)$.

In this paper, a  \textit{framed representation} $\hat{\rho}$ is a pair $(\rho, \beta)$ where $\rho  \in \text{Hom}(\Pi, \pslc)$ and $\beta:{F}_\infty \to \cp$ is a $\rho$-equivariant map. (Throughout this article, subgroups of $\pslc$ are assumed to act on $\cp$ by M\"{o}bius transformations, and on hyperbolic $3$-space $\mathbb{H}^3$ by isometries.) 
  The $\rho$-equivariance implies that if one fixes  fundamental domain $F$ of the action of $\Pi$ on the universal cover $\widetilde{S}$, one obtains for  each puncture a choice of a fixed point on $\cp$  of the corresponding boundary monodromy.

 For the notion of a \textit{non-degenerate} framed representation, see Definition 2.6 of \cite{GupGKM} or  Definition 4.3 of \cite{AllBrid}.  The relevant fact for us is that given a non-degenerate framed representation $\hat\rho$, the representation $\rho$ obtained by forgetting the framing is a  \textit{non-degenerate} representation, defined as follows (see Definition 2.4 of \cite{GupGKM}):

 \begin{defn}\label{degen}
A representation $\rho:\Pi\to \pslc$ is said to be \textit{degenerate} if either
\begin{itemize}
\item[(a)] the image of $\rho$ has a global fixed point on $\cp$, and $\rho(\gamma_i)$  is parabolic or identity for each peripheral loop $\gamma_i$, or
\item[(b)] the image of $\rho$ preserves a two-point set on $\cp$, which is fixed by each $\rho(\gamma_i)$  (where $1\leq i\leq k$). In this case $\rho$ is said to be \textit{co-axial} since its image would preserve a geodesic line in $\mathbb{H}^3$.
\end{itemize}
A representation is then said to be \textit{non-degenerate} if it is not degenerate.
\end{defn}

Note that it follows from this definition that a non-elementary representation is automatically non-degenerate; however there are elementary representations that are non-degenerate -- see \S2.4 of \cite{GupGKM}. Conversely, given a non-degenerate representation $\rho$, one can construct a non-degenerate framing $\beta$ by assigning to each puncture one of the fixed points of the holonmy/monodromy around it  -- see Proposition 3.1 of \cite{GupGKM}.

 \subsection{Fock-Goncharov coordinates}

An ideal triangulation $T$ on $S$ is a collection of geodesic arcs between cusps (the \textit{edges}), each homotopically non-trivial and non-peripheral, such that the complementary regions are ideal triangles. Given a  choice of an ideal triangulation $T$, and a \textit{generic} framed representation $\hat{\rho}$, one can define non-zero complex numbers to each edge $e \in T$ as follows: consider a lift $\tilde{e}$, and consider the two ideal triangles $\Delta_+$ and $\Delta_-$ in the lifted ideal triangulation that share the side $\tilde{e}$. The ideal vertices of $\Delta_\pm$ determine four points in $F_\infty$, and the image of these under $\beta$ will determine four distinct points in $\cp$ (here we use the genericity assumption). The \textit{Fock-Goncharov coordinate} $c(e) \in \mathbb{C}^\ast$ is then the cross-ratio of these four points.

Let $\widehat{\chi}(S)$ be the \textit{moduli space of framed representations}, namely, the set of framed representations up to the equivalence relation $(\rho, \beta)\sim (A\rho A^{-1}, A \cdot \beta)$ for any $A\in \pslc$. Then  Fock-Goncharov showed that the assignment  $[\hat{\rho}] \mapsto \{ c(e)\} _{e\in T}$ defines a birational isomorphism $\Phi_T: \widehat{\chi}(S)\to (\mathbb{C}^\ast)^{\lvert T\rvert}$ (see Theorem 1 of \cite{FG}).

We shall use the following result of Allegretti-Bridgeland:

\begin{thm}[Theorem 9.1 of \cite{AllBrid}]\label{thm-ab} For a  non-degenerate framed representation $\hat{\rho}$, there is an  ideal triangulation $T$ such that the Fock-Goncharov coordinates for $\hat{\rho}$ are well-defined and non-zero.
\end{thm}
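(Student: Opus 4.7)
The goal is to produce an ideal triangulation $T$ of $S$ such that for every edge $e \in T$, the four framing points $\beta(v_1), \beta(v_2), \beta(v_3), \beta(v_4)$ at the ideal vertices of the quadrilateral formed by the two triangles adjacent to (a lift of) $e$ are pairwise distinct in $\cp$. Since the cross-ratio of four points on $\cp$ lies in $\mathbb{C}^\ast$ if and only if no two of them coincide, this distinctness criterion is exactly equivalent to the Fock--Goncharov coordinate $c(e)$ being both well-defined and non-zero. Call an edge \emph{$\hat{\rho}$-degenerate} if this fails; the theorem reduces to producing a triangulation with no $\hat{\rho}$-degenerate edges.

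The plan is to exploit the connectedness of the flip graph of ideal triangulations together with Definition \ref{degen}. First I would fix an arbitrary ideal triangulation $T_0$ and list its $\hat{\rho}$-degenerate edges. At each such $e$, a coincidence among two of the four adjacent framings is a single algebraic constraint on the peripheral fixed points; because $\beta$ is $\rho$-equivariant, such a constraint propagates along the $\Pi$-orbit of $\tilde{e}$. The core use of non-degeneracy is to argue that these coincidences cannot be simultaneously preserved under every possible flip: if they were, then tracing implications across the surface along chains of flipped edges would force the image of $\beta$ to lie in a $\rho$-invariant subset of $\cp$ of one or two points, each necessarily fixed by the corresponding peripheral monodromies, directly contradicting Definition \ref{degen}.

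I would then convert this observation into a finite procedure: starting from $T_0$, at each step flip some $\hat{\rho}$-degenerate edge $e$ and argue that a suitable monovariant -- for example, a weighted count of coincident framing-pairs over a bounded neighbourhood of a fundamental domain -- strictly decreases. Iterating yields a triangulation with no $\hat{\rho}$-degenerate edges. The main obstacle is precisely the possibility of cascades: a single flip can, in principle, create new $\hat{\rho}$-degenerate edges nearby, so the monovariant must be set up globally rather than locally, and care is needed to ensure that flipping never increases total complexity. A cleaner route I would pursue in parallel is to regard the set $Z_T \subset \widehat{\chi}(S)$ on which $\Phi_T$ is undefined or vanishes as a proper Zariski-closed subvariety for each $T$, and to prove that $\bigcap_T Z_T$ is contained in the locus of degenerate framed representations by classifying exactly which coincidence patterns among $\{\beta(v)\}_{v \in F_\infty}$ can survive \emph{every} ideal triangulation of $S$; the non-degeneracy hypothesis should precisely rule out all such universal patterns.
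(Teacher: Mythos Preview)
The paper does not prove this statement at all: it is quoted as Theorem 9.1 of \cite{AllBrid} and used as a black box. There is therefore nothing in the present paper to compare your argument against; any genuine comparison would have to be with the Allegretti--Bridgeland paper itself.

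On the substance of your sketch: you have correctly identified the right reformulation (find a triangulation with no $\hat\rho$-degenerate edges) and two plausible strategies, but neither is carried to completion. In the flip-graph approach you yourself flag the main difficulty --- flipping a bad edge can create new bad edges --- and you do not actually construct a monovariant that provably decreases. Saying ``a weighted count of coincident framing-pairs over a bounded neighbourhood of a fundamental domain'' is not a definition, and without one there is no argument. In the Zariski-closure approach, the statement that $\bigcap_T Z_T$ is contained in the degenerate locus is precisely the content of the theorem; you have restated it rather than proved it, and ``classifying exactly which coincidence patterns \ldots\ can survive every ideal triangulation'' is again the whole problem. So as written this is a reasonable outline of where a proof might live, but it contains no step that could not fail. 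The actual Allegretti--Bridgeland argument requires a more careful combinatorial analysis (in particular involving tagged triangulations and a case-by-case study of how coincidences among the $\beta$-values constrain the representation), which your proposal gestures at but does not supply.
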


 \subsection{Pleated planes in $\mathbb{H}^3$}
  A \textit{pleated plane} in $\mathbb{H}^3$ is a map $$\Psi:\widetilde{S} \to \mathbb{H}^3$$
that is totally geodesic on each lift of an ideal triangle on $S$ determined by an ideal triangulation $T$.  The image of the edges of the lifted ideal triangulation $\tilde{T}$ on $\widetilde{S}$ is the \textit{pleating locus} of $\Psi$. Note that the ideal vertices of the ideal triangles in $\tilde{T}$ are precisely the points in the Farey set $F_\infty$.

Given a framed representation $\hat{\rho}= (\rho, \beta)$, one can build a $\rho$-equivariant pleated plane $\Psi$ as follows: send each ideal triangle $\Delta$ of $\tilde{T}$ with ideal vertices $a,b,c$  to the totally-geodesic ideal triangle in $\mathbb{H}^3$ with ideal vertices $\beta(a), \beta(b),\beta(c)$.

If $\hat{\rho}$ is in addition, non-degenerate, then for any line of the pleating locus (i.e.\ $\Psi(\tilde{e})$ for $\tilde{e} \in \tilde{T}$) the two adjacent totally-geodesic ideal triangles $\Psi(\Delta_\pm)$  determine four distinct ideal vertices in $\cp$. Then the argument of the Fock-Goncharov coordinate $c(e)\in \mathbb{C}^\ast$ equals the angle between the two totally-geodesic planes containing the two ideal triangles $\Psi(\Delta_-)$ and $\Psi(\Delta_+)$ respectively. The real number  $\ln \lvert c(e)\rvert$, on the other hand, is the shear-parameter measuring the (signed) distance between points on the common side of $\Delta_\pm$ that are the feet of the perpendiculars from the opposite ideal vertices.

 This provides a geometric interpretation of the Fock-Goncharov coordinates.

\subsection{Alternative definitions of domination}
We note that our definition of $j$ dominating $\rho$ (see  \eqref{supl}) is equivalent to the following two definitions:
\begin{enumerate}
\item[(A)]  the inequality  in \eqref{supl} is true when the supremum is taken over \textit{all} closed curves in $\Pi$,
\item[(B)] there is a  $1$-Lipschitz map $f:\mathbb{H}^2 \to \mathbb{H}^3$ that is  $(j,\rho)$-equivariant, that is, satisfies $f\circ j(\gamma) = \rho(\gamma) \circ f$ for all $\gamma \in \Pi$.
\end{enumerate}

\noindent Note that (A) can be thought of as domination in the \textit{complete} length spectrum. \\

In fact, denote by
$$K=\sup\limits_{\gamma} \frac{l_\rho(\gamma)}{l_j(\gamma)},$$
where $\gamma$ varies over all non-peripheral essential simple closed curves (as in \eqref{supl}).
Then if $L$ is the minimal Lipschitz  constant, over all maps $f: \mathbb{H}^2\to \mathbb{H}^3$ that are $(j,\rho)$-equivariant, then $K\leq L$ (see Lemma 4.5 of \cite{GK}).

If $L\geq 1$, then by  Theorem 1.3 of \cite{GK}, 
there exists a $(j,\rho)$-equivariant $L$-Lipschitz map $f$ and  a geodesic lamination $\lambda$ on  $\mathbb{H}^2/j(\Pi)$ that is maximally
stretched, i.e., the restriction of $f $ on $\lambda$ realizes the
Lipschitz constant $L$.  Moreover, 
the maximal stretch lamination $\lambda$ is compact and contained in the convex core of $\mathbb{H}^2/j(\Pi)$.

Suppose that $L>1$, then no geodesic boundary component is in $\lambda$ since we require their lengths to remain the same.
Thus, $\lambda$ is contained in the interior of the convex core of $\mathbb{H}^2/j(\Pi)$ and it can be approximated by a sequence of non-peripheral essential simple closed geodesics.
This implies that $K >1$.
As a result, $K\leq 1$ implies that $L\leq 1$. We have shown that  \eqref{supl}, $(A)$ and $(B)$ are equivalent.
 
 \medskip

In the case of a closed surface or $\mathbb{H}^2/j(\Pi)$ is a hyperbolic surface without geodesic boundary, if $L\geq 1$, we in fact have $L=K$ by the work in \cite{GK} (see Theorem 1.3, Lemma 4.6 and the proof of Lemma 5.9).
Thus our definition of \textit{strict} domination, namely, a strict inequality in \eqref{supl}, is equivalent to the following:
\begin{enumerate}
\item[(A')]  the inequality in \eqref{supl} is strict when the supremum is taken over \textit{all} closed curves in $\Pi$,
\item[(B')]  there is an  $L <1$, and an $L$-Lipschitz map $f:\mathbb{H}^2 \to \mathbb{H}^3$ that is  $(j,\rho)$-equivariant.
\end{enumerate}

\smallskip

These alternative definitions of $j$ \textit{strictly dominating} $\rho$ are not all equivalent in the case that $\mathbb{H}^2/j(\Pi)$ is a hyperbolic surface with at least one geodesic boundary. 

As mentioned before, we always have $K \leq L$.  However, since the representation $\rho$ doesn't change the length of the boundary, 
we always have  $L\geq 1$. Thus $(B')$ does not make sense. In fact, if  $K<1$, then $L=1$ (otherwise, if $L>1$, then as we have seen above, one can show $K>1$ by approximating the maximal stretch lamination $\lambda$ by non-peripheral simple closed curves). Moreover, the result of Gueritaud-Kassel implies that the maximal stretch lamination $\lambda$ is the union of the geodesic boundary components of $j$. In addition,  $K < 1$ does not even imply (A') above, since there could be long non-simple curves most of whose length is near the boundary.

\section{Proof of Theorem \ref{thm1}}

Let $\mathcal{L} = \{l_1,l_2,\ldots l_k\}$ and suppose $\rho \in \text{Hom}(\Pi, \mathcal{L})$ is a non-Fuchsian representation.
In \S3.1 we shall assume that $\rho$ is non-degenerate; the degenerate case is handled separately in \S3.2.

\subsection{Case that $\rho$ is non-degenerate}

In this case, by Proposition 3.1 of \cite{GupGKM}, we know that there exists a framing $\beta:F_\infty \to \cp$ such that $\hat\rho = (\rho,\beta)$ is a non-degenerate \textit{framed} representation. We choose an ideal triangulation $T$ by Theorem \ref{thm-ab}, which determines Fock-Goncharov coordinates $\{c(e)\} _{e\in T} \in  (\mathbb{C}^\ast)^{\lvert T\rvert}$.

As described in \S2, these coordinates determine a $\rho$-equivariant pleated plane
\begin{equation}\label{pplane}
\Psi : \widetilde{S}  \to \mathbb{H}^3.
\end{equation}
By Proposition 3.3 of \cite{GupGKM}, the \textit{straightening} of this pleated plane results in a map $\overline{\Psi}:\widetilde{S} \to \HH^3$ whose image lies in the equatorial plane, that is a totally geodesic copy of $\HH^2$. Moreover, $\overline{\Psi}$ is the developing map of a hyperbolic surface $\widehat{S}$ homeomorphic to $S_{g,k}$, with geodesic boundaries or cusps at the punctures.   (the cusps arise exactly at the punctures corresponding to zero $\rho$-length   -- see the following lemma).  Let $j_0: \Pi \to \pslr$ denote the Fuchsian representation corresponding to $\widehat{S}$; the straightened pleated plane $\overline{\Psi}$ is then $j_0$-equivariant.

The pleating lines for $\Psi$ determine a measured lamination $\lambda$ on $\widehat{S}$, comprising a collection $\mathcal{C}$ of disjoint geodesics with weights in $(0,2\pi)$, such that each geodesic boundary component of $\widehat{S}$ has at least one leaf of $\lambda$ spiralling onto it.


Our first observation is:

\begin{lem}\label{lem0}
The $j_0$-length of the boundary curve around the $i$-th puncture $p_i$ is equal to $l_i$, for $1\leq i\leq k$. That is, $j_0 \in \text{Hom}(\Pi, \mathcal{L})$ as well.
\end{lem}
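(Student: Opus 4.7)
The plan is to show that the translation length of any peripheral monodromy depends only on the shear-parameters $\ln|c(e)|$ attached to the edges of $T$ incident to the corresponding puncture; since the straightening procedure preserves these shears and merely zeroes out the bending angles $\arg c(e)$, the boundary lengths of $\rho$ and $j_0$ must coincide.

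I would fix a puncture $p_i$ and a lift $x\in F_\infty$ fixed by a peripheral element $\gamma\in\Pi$ representing $\gamma_i$. After conjugating $\rho$ in $\pslc$---which does not affect translation lengths---I may arrange that $\beta(x)=\infty$ in the upper half-space model of $\HH^3$, so that $\rho(\gamma)$ acts on $\mathbb{C}$ as an affine map $z\mapsto \alpha z+b$; its translation length then equals $\bigl|\ln|\alpha|\bigr|$. Enumerate the edges of $\tilde{T}$ incident to $x$ as $\tilde{e}_j$ for $j\in\mathbb{Z}$, cyclically ordered by the fan at $x$, let $w_j \in\mathbb{C}$ be the $\beta$-image of the opposite ideal endpoint of $\tilde e_j$, and let $N$ be the number of edges of $T$ incident to $p_i$ counted with multiplicity, so that $w_{j+N}=\alpha w_j+b$.

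A direct cross-ratio computation, carried out in the convention dictated by $\ln|c(e)|=\mathrm{shear}(e)$ and $\arg c(e)=\mathrm{bending}(e)$ as recorded in Section 2.3, yields the recursion $w_{j+1}-w_j = c(e_j)\,(w_j-w_{j-1})$ where $e_j$ is the image of $\tilde e_j$ in $T$. Iterating over one period and using $w_{j+N}=\alpha w_j + b$ gives $\alpha = \prod_{j=1}^{N} c(e_j)$, and therefore
\[
l_\rho(\gamma_i) \;=\; \bigl|\ln|\alpha|\bigr| \;=\; \Bigl|\sum_{j=1}^{N}\ln|c(e_j)|\Bigr|.
\]
By construction $\overline{\Psi}$ has the same shears $\ln|c(e_j)|$ at every edge and zero bending, so the Fuchsian representation $j_0$ has positive real Fock-Goncharov coordinates $|c(e_j)|$ on the same triangulation. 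Repeating the computation for $j_0$---now in the upper half-plane with $\overline{\Psi}(x)=\infty$---shows that $j_0(\gamma)$ acts as $z\mapsto \alpha_0 z+b_0$ on $\mathbb{R}$ with $\alpha_0=\prod_{j=1}^{N}|c(e_j)|$. In particular $|\alpha_0|=|\alpha|$, and hence $l_{j_0}(\gamma_i)=l_\rho(\gamma_i)=l_i$. When $l_i=0$ the equality $|\alpha|=1$ forces $\sum_{j}\ln|c(e_j)|=0$, so $\alpha_0=1$ and $j_0(\gamma)$ is parabolic or the identity, matching the parenthetical observation that cusps of $\widehat{S}$ occur exactly at the punctures of zero $\rho$-length.

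The one substantive step is the cross-ratio recursion $w_{j+1}-w_j=c(e_j)(w_j-w_{j-1})$: one has to pin down the ordering of the four ideal vertices entering the Fock-Goncharov cross-ratio so that it is compatible with $\ln|c(e)|=\mathrm{shear}(e)$ and $\arg c(e)=\mathrm{bending}(e)$. Once this identity is in hand, everything else is essentially formal, and the transfer from $\rho$ to $j_0$ is immediate because the quantity $|\alpha|$ only involves the moduli $|c(e)|$, which straightening preserves.
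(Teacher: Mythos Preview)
Your proof is correct and follows essentially the same approach as the paper: both rest on the fact that the peripheral monodromy around $p_i$ has derivative (eigenvalue squared) equal to $\prod_j c(e_j)$ over the incident edges, so that the translation length $\bigl|\sum_j \ln|c(e_j)|\bigr|$ depends only on the shear parameters and is therefore unchanged by straightening. The only difference is that the paper outsources this computation to Lemma~3.2 and Corollary~3.4 of \cite{GupGKM}, whereas you carry it out explicitly via the recursion $w_{j+1}-w_j=c(e_j)(w_j-w_{j-1})$; your self-contained derivation is a welcome clarification.
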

\begin{proof}
Let $s_1,s_2,\ldots ,s_k$ be the Fock-Goncharov coordinates for $\hat{\rho}$,  associated to the edges of $T$ incident on the $i$-th puncture $p_i$, and let $l_i = \sum\limits_j \ln \lvert s_j \rvert $ be their sum.  Then  by Lemma 3.2 of \cite{GupGKM}, we know that  the monodromy around the puncture $p_i$ is
\begin{itemize}
\item[(a)] loxodromic if $l_i \neq 0$
\item[(b)] parabolic or identity if $l_i = 0$ and  $\sum\limits_j Arg(s_j) \in 2\pi \mathbb{Z}  $, and
\item[(c)] elliptic if $l_i= 0$ but $\sum\limits_j Arg(s_j) \notin 2\pi \mathbb{Z} $.
\end{itemize}
Moreover, in each case, the translation length of the monodromy element (i.e.\  the $\rho$-length of the loop around $p_i$) is precisely $l_i$.  By Corollary 3.4 in \cite{GupGKM}, we then see that for the straightened surface, the monodromy around $p_i$ also has translation length $l_i$.
\end{proof}

The following geometric lemma shall be used to quantify how the translation length of a non-peripheral loop changes when we straighten:

\begin{lem}\label{trig}
For any $L>0$, $\alpha \in (0, \pi/2)$ and $\theta \in (-\pi,\pi)$, there is a constant $C>0$ such that the following holds:

Let $\mathbb{H}^2$ be isometrically embedded as the equatorial plane in $\mathbb{H}^3$, containing a geodesic segment $\ell$ and a bi-infinite geodesic line $\gamma$,  such that the two intersect at an angle at least $\alpha$, and $\ell$ has length at least $L$ on either side of $\gamma$. Let $\hat{\ell}$ be the piecewise-geodesic in $\mathbb{H}^3$ obtained when the equatorial plane is pleated along $\gamma$ by a pleating angle at least $\theta$.
Then the distance in $\mathbb{H}^3$ between the endpoints of $\hat{\ell}$ is less than $|\ell|- C$.
\end{lem}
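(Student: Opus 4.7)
The plan is to reduce the lemma to an elementary trigonometric computation in $\mathbb{H}^3$, in three steps.

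\emph{Step 1 (Bend angle at the crossing point).} First I would compute the bend angle $\phi$ of the piecewise geodesic $\hat{\ell}$ at the intersection point $p=\ell\cap\gamma$. Set up an orthonormal frame $(e_1,e_2,e_3)$ of $T_p\mathbb{H}^3$ with $e_1$ along $\gamma$, $e_2$ perpendicular to $\gamma$ inside the equatorial $\mathbb{H}^2$, and $e_3$ normal to that plane. The two unit tangent vectors to $\ell$ at $p$ are then $\pm(\cos\alpha\, e_1 + \sin\alpha\, e_2)$. Pleating rotates the half-space on one side of $\gamma$ about the $e_1$-axis by the pleating angle $\theta'$, where $|\theta'|\geq|\theta|>0$. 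Rotating the $\ell_2$-tangent by $\theta'$ about $e_1$ and taking the dot product with the unchanged $\ell_1$-tangent gives the clean identity
$$\sin(\phi/2) = \sin\alpha \cdot |\sin(\theta'/2)|,$$
so $\phi \geq \phi_0 > 0$ for a constant $\phi_0 = \phi_0(\alpha,\theta)$.

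\emph{Step 2 (Hyperbolic law of cosines).} Next, $\hat{\ell}$ is a piecewise geodesic in $\mathbb{H}^3$ with two segments of lengths $a,b\geq L$ meeting at $p$ with interior angle $\pi-\phi$. The hyperbolic law of cosines gives the distance $d$ between its endpoints as
$$\cosh d \;=\; \cosh a\, \cosh b + \sinh a\, \sinh b\, \cos\phi \;=\; \cosh(a+b) \;-\; 2\sinh a\, \sinh b\, \sin^2(\phi/2),$$
which already shows $d<a+b$ whenever $\phi>0$.

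\emph{Step 3 (Quantitative gap).} The last task is to upgrade this strict inequality to a uniform constant $C = C(L,\alpha,\theta)>0$ with $a+b-d\geq C$. From $a,b\geq L$ and $\phi\geq\phi_0$, the identity in Step 2 gives $\cosh(a+b)-\cosh d \geq 2\sinh^2(L)\sin^2(\phi_0/2)$. To convert this lower bound on the difference of $\cosh$'s into a lower bound on $a+b-d$ that does not degenerate as $a+b\to\infty$, I would verify by direct differentiation of the law of cosines that $(a,b)\mapsto a+b-d$ is strictly increasing in each of $a$ and $b$: implicit differentiation gives $\partial_a d = (\sinh a \cosh b + \cosh a \sinh b \cos\phi)/\sinh d$, and the identity $\cosh^2-\sinh^2=1$ reduces the inequality $\partial_a d < 1$ to $\sinh^2 b\,\sin^2\phi > 0$, which holds. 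Hence the infimum of $a+b-d$ on $\{a,b\geq L\}$ is achieved at $a=b=L$ and equals an explicit positive constant $C(L,\alpha,\theta)$. (Equivalently, one may compute the asymptotic limit $\lim_{\min(a,b)\to\infty} (a+b-d) = -2\ln\cos(\phi/2) > 0$ and combine with continuity to extract the uniform constant.)

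\emph{Main obstacle.} There is no real obstacle; the lemma is elementary trigonometry. The only item requiring care is the bookkeeping of Step 1 to pin down the correct formula for the bend angle in terms of $\alpha$ and $\theta'$, and the monotonicity calculation in Step 3 which guarantees that the quantitative gap is uniform in the segment lengths $a,b\geq L$.
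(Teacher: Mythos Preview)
Your proof is correct and follows the same route as the paper: compute the bend angle at the crossing point (your identity $\sin(\phi/2)=\sin\alpha\,|\sin(\theta'/2)|$ is exactly the paper's $1+\cos\beta=2\sin^2\alpha\sin^2(\theta/2)$ with $\beta=\pi-\phi$), then apply the hyperbolic law of cosines to the triangle $xOy$. Your Step~3 is in fact more careful than the paper's concluding sentence: the paper stops after observing $\cosh|\ell|-\cosh d>\sinh|Ox|\sinh|Oy|\cdot\delta$ and invoking monotonicity of $\sinh$ and $\cosh$, whereas your monotonicity argument for $a+b-d$ (reducing to $a=b=L$) makes explicit why this yields a uniform additive gap as $a,b\to\infty$.
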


\begin{figure}
  \centering
  \includegraphics[scale=.3]{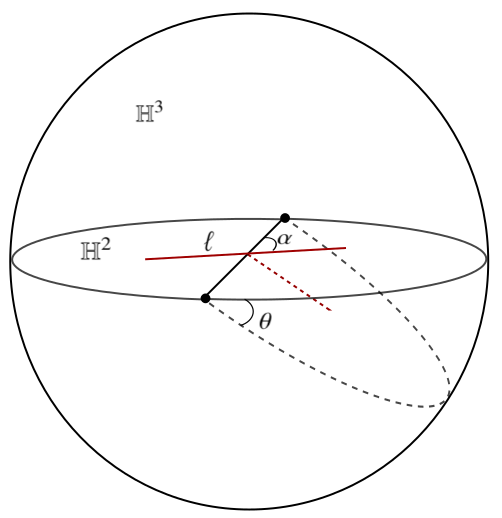}\vspace{.1in}
  \caption{The distance between the endpoints of the geodesic segment $\ell$ on the equatorial plane decreases by a definite amount, when bent along the geodesic line $\gamma$ (see Lemma \ref{trig}). }
\end{figure}

\begin{proof}
We denote by $x, y$ be the two endpoints of $\hat{\ell}$, and by $O$ be the intersection point of
${\ell}$ with $\gamma$.
Consider the geodesic triangle with vertices $x, O, y$.
The angle at $O$ is some number $\beta>0$, and it is easy to see that it depends only on the pleating angle, and the intersection angle of $\ell$ with $\gamma$. \\

\noindent\textit{Claim.} \textit{There is a constant $\delta=\delta(\alpha,\theta)>0$ such that $1+\cos\beta >\delta.$}\\
\textit{Proof of claim.}
It is enough to consider the extreme case that  the intersection angle of $\ell$ with $\gamma$
 is equal to $\alpha$, and the pleating angle is equal to $\theta$.

Assume that we are working in the unit ball model of $\mathbb{H}^3$, where $O$ is the center of the ball, $\gamma$ is a diameter with endpoints $(1,0,0)$ and $(-1,0,0)$, and $\ell$ is the entire diameter with endpoints $(\cos\alpha, -\sin\alpha, 0)$ and $(-\cos\alpha, \sin\alpha,0)$. Then the computation of $\beta$  becomes an elementary Euclidean geometry problem: after we bend the equatorial plane on one side of $\gamma$ by an angle $\theta$ (as shown in Figure 1), then $\ell$ deforms to  $\hat{\ell}$, which is the concatenation of two radial rays from $O$ with endpoints  $(\cos\alpha, -\sin\alpha, 0)$  and $(-\cos\alpha, \sin\alpha\cos\theta, - \sin \theta)$ respectively. Since the angle between them at $O$ equals $\beta$, we can compute
$$\cos \beta = (\cos\alpha, -\sin\alpha, 0) \cdot (-\cos\alpha, \sin\alpha\cos\theta, - \sin \theta) = - \cos^2(\alpha) - \sin^2(\alpha)\cos\theta$$
from which one can deduce that
$$1+\cos\beta=2\sin^2(\alpha)\sin^2(\frac{\theta}{2})$$ proving the claim. $\qed$.\\

Using the law of cosines in hyperbolic trigonometry (see, for example,  Chapter 8 of \cite{Marden}) we have:
\begin{eqnarray*}
\cosh |xy| &=& \cosh |Ox|\cosh |Oy|-\sinh |Ox|\sinh |Oy|\cos\beta.
\end{eqnarray*}
Since $|\ell| =  |Ox| + |Oy|$, we obtain
\begin{eqnarray*}\cosh|\ell|-\cosh |xy|&=&\sinh |Ox|\sinh |Oy|(1+\cos \beta)\\
&>& \sinh |Ox|\sinh |Oy|\delta
\end{eqnarray*}
Since $|Ox|,|Oy|\geq L$, and both $\sinh$ and $\cosh$ are increasing functions on the positive reals, we are done. \end{proof}

Finally, we shall need the following fact (see Lemma 2.3 of \cite{Wolpert}):

\begin{lem}[Generalized Collar Lemma]\label{hyp-lem} Given a hyperbolic surface $\widehat{X}$ of finite type, with finitely many geodesic boundaries and cusps, there exists a $D>0$ such that any non-peripheral simple closed geodesic $\gamma$ on $\widehat{X}$ remains at least distance $D$ away from the geodesic boundary components, and standard horoball-neighborhoods of the cusps.
\end{lem}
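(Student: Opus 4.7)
The plan is to derive this from the classical collar lemma for closed hyperbolic surfaces, combined with the standard Margulis-type behavior at cusps, after a doubling trick that converts geodesic boundary into interior simple closed geodesics.

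First I would handle the geodesic boundary. Let $\partial_1,\dots,\partial_m$ denote the boundary components of $\widehat{X}$, of fixed lengths $b_1,\dots,b_m$. Form the Schottky double $\widehat{X}^{d}$ by gluing two mirror copies of $\widehat{X}$ along their common boundary; each $\partial_i$ then becomes an interior simple closed geodesic of the complete hyperbolic surface $\widehat{X}^{d}$. The classical collar lemma produces an embedded collar of width $w_i=\mathrm{arcsinh}(1/\sinh(b_i/2))$ around $\partial_i$ in $\widehat{X}^{d}$ that is disjoint from every other simple closed geodesic. Since any non-peripheral simple closed geodesic $\gamma$ of $\widehat{X}$ is disjoint from $\partial_i$, and (together with its mirror image) gives a simple closed geodesic of $\widehat{X}^{d}$ disjoint from $\partial_i$, we obtain $\mathrm{dist}_{\widehat{X}}(\gamma,\partial_i)\ge w_i$.

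Next I would handle the cusps. For each cusp $p_j$, fix a standard embedded horoball neighborhood $H_j$ bounded by a horocycle of small fixed length, chosen so that the $H_j$ are mutually disjoint and also disjoint from the boundary collars above. By the Margulis lemma, any geodesic arc of a closed geodesic entering $H_j$ must be a power of the cusp loop; hence no non-peripheral simple closed geodesic $\gamma$ can meet $H_j$. Shrinking $H_j$ slightly to a sub-horoball $H_j'$ bounded by a horocycle of, say, half the original length then yields $\mathrm{dist}_{\widehat{X}}(\gamma,H_j')\ge c>0$ for a constant $c$ depending only on the shrinking factor.

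Setting $D:=\min(w_1,\dots,w_m,c)$ completes the argument. The only point that requires care is the uniformity of $D$ over \emph{all} non-peripheral simple closed geodesics $\gamma$, but this is automatic since the $w_i$ depend only on the fixed boundary lengths and $c$ depends only on the fixed choice of standard horoballs; neither depends on $\gamma$. I do not anticipate any serious obstacle beyond carefully extracting these uniform collar and horoball widths from the classical statements.
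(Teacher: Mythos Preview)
Your argument is essentially correct and follows a standard route. Note, however, that the paper does \emph{not} actually prove this lemma: it is stated as a known fact with a reference to Lemma~2.3 of \cite{Wolpert}. So there is no ``paper's own proof'' to compare against; you are supplying a proof where the authors simply cite one.

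A couple of points where your write-up could be tightened. In the boundary step, what you really use is the version of the collar lemma asserting that any simple closed geodesic disjoint from $\partial_i$ stays outside the standard $w_i$-collar of $\partial_i$ (equivalently, that disjoint simple closed geodesics have disjoint standard collars); it would be cleaner to invoke that form directly rather than speaking of $\gamma$ ``together with its mirror image''. In the cusp step, the sentence ``any geodesic arc of a closed geodesic entering $H_j$ must be a power of the cusp loop'' is not quite well-formed: an arc is not a loop. The precise fact you want is that no closed geodesic enters a standard cusp neighborhood bounded by a sufficiently short horocycle (this is the cusp case of the collar/Margulis lemma, since any closed curve contained in such a neighborhood is homotopic to a power of the peripheral loop and hence has no geodesic representative). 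With that phrasing the shrinking trick to extract a positive $c$ goes through as you say, and your final constant $D=\min(w_1,\dots,w_m,c)$ is uniform in $\gamma$ for exactly the reason you give.
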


As a consequence of the distance-decreasing property in Lemma \ref{trig}, we obtain:

\begin{prop}\label{prop1}
The Fuchsian representation $j_0:\Pi \to \pslr$ dominates the representation $\rho$. Moreover, for any simple closed curve $\gamma \in \Pi$ that intersects $\lambda$ on $\hat{S}$, the $j_0$-length of $\gamma$ is strictly greater than its $\rho$-length, such that
\begin{equation*}
\sup\limits_{\gamma} \frac{l_\rho(\gamma)}{l_{j_0}(\gamma)} < 1
\end{equation*}
when $\gamma$ varies over all simple closed curves on $S_{g,k}$ that intersect $\lambda$.
 \end{prop}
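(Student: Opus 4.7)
The plan is to compare, for each simple closed curve $\gamma$ on $S_{g,k}$, the translation length $l_\rho(\gamma)$ with $l_{j_0}(\gamma)$ by analyzing the piecewise-geodesic image $\Psi(\widetilde\gamma)\subset\mathbb{H}^3$ of a $j_0$-geodesic lift of $\gamma$: its arc length equals $l_{j_0}(\gamma)$ per period (because $\Psi$ and $\overline\Psi$ agree up to isometry on each ideal triangle of $\widetilde{T}$), while its chord length bounds $l_\rho(\gamma)$ from above.

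First I would set up the comparison. Take the $j_0$-geodesic representative of $\gamma$ on $\widehat S$, lift to a bi-infinite geodesic $\widetilde\gamma\subset\widetilde S$, and parameterize it so that the deck transformation $\gamma\in\Pi$ acts on $\widetilde\gamma$ as translation by $l_{j_0}(\gamma)$. By $\rho$-equivariance of $\Psi$, for any $\widetilde x\in\widetilde\gamma$,
\[
l_\rho(\gamma)\;\le\; d_{\mathbb{H}^3}\bigl(\Psi(\widetilde x),\,\rho(\gamma)\cdot\Psi(\widetilde x)\bigr) \;=\; d_{\mathbb{H}^3}\bigl(\Psi(\widetilde x),\,\Psi(\gamma\cdot\widetilde x)\bigr).
\]
Since both $\Psi$ and $\overline\Psi$ restrict to isometric embeddings on each ideal triangle of $\widetilde T$, the arc of $\Psi(\widetilde\gamma)$ between $\Psi(\widetilde x)$ and $\Psi(\gamma\cdot\widetilde x)$ has length exactly $l_{j_0}(\gamma)$. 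The triangle inequality applied to this piecewise-geodesic in $\mathbb{H}^3$ yields $l_\rho(\gamma)\le l_{j_0}(\gamma)$, which is the domination statement.

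For pointwise strict inequality when $\gamma$ meets $\lambda$: the lift $\widetilde\gamma$ crosses at least one pleating edge $\widetilde e$ of $\widetilde T$ at an interior point and at a positive transverse angle $\alpha$; across $\widetilde e$, the two adjacent ideal triangles are carried by $\Psi$ to totally geodesic ideal triangles bent along the common edge by the pleating angle $\theta=|\arg c(e)|\in(0,\pi]$, strictly positive because $\widehat\rho$ is non-degenerate and non-Fuchsian (so at least one Fock--Goncharov coordinate fails to be a positive real, and only finitely many values of $\theta$ occur, one per edge of $T$). Choosing $\widetilde x$ and $\gamma\cdot\widetilde x$ far enough on either side of this crossing --- passing to a sufficiently high power of $\gamma$ first if $l_{j_0}(\gamma)$ is small --- Lemma \ref{trig} gives a definite strict decrease in chord length, so $l_\rho(\gamma)<l_{j_0}(\gamma)$.

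The hardest part is upgrading these pointwise strict inequalities to the uniform bound $\sup_\gamma l_\rho(\gamma)/l_{j_0}(\gamma)<1$. The plan is to render Lemma \ref{trig} quantitative and uniform: the pleating angles $\theta$ are bounded below by some $\theta_0>0$ (finitely many values); by the generalized collar lemma (Lemma \ref{hyp-lem}), every simple closed geodesic on $\widehat S$ meeting $\lambda$ has a definite-length subarc in a collar about each crossing, with length scaling like $1/\sin\alpha$ when the crossing angle is small. Balancing this against the per-crossing shortening in Lemma \ref{trig} (which scales like $\sin^2\alpha\sin^2(\theta/2)$) and aggregating over the intersections of $\widetilde\gamma$ with the pleating locus per period, the deficit $l_{j_0}(\gamma)-l_\rho(\gamma)$ should be bounded below by a fixed positive multiple of $l_{j_0}(\gamma)$. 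The subtle scenario to exclude is a simple closed curve intersecting $\lambda$ very rarely compared to its length; for this I would invoke a compactness argument on the space of projective measured laminations on $\widehat S$, together with continuity of length functions and intersection numbers, to rule out any sequence $\gamma_n$ of such curves with ratio approaching $1$, thereby producing the uniform gap.
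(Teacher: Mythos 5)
Your setup is the same as the paper's (compare arc length of the invariant broken geodesic $\Psi(\widetilde\gamma)$, which equals $l_{j_0}(\gamma)$ per period, with the chordal displacement bounding $l_\rho(\gamma)$, and apply Lemma \ref{trig} at each crossing of the pleating locus), and your arguments for domination and for pointwise strict inequality are fine. The gap is in the uniformity step, and your proposed ``balancing'' goes the wrong way: if a crossing angle $\alpha$ is small, Lemma \ref{trig} gives a per-crossing gain of order $\sin^2\alpha\,\sin^2(\theta/2)$, while (by your own estimate) the geodesic spends a length that blows up as $\alpha\to 0$ fellow-travelling the leaf near that crossing; hence the deficit-to-length contribution of such a crossing degenerates, and no fixed multiplicative gap can come out of this trade-off. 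The missing idea -- which is exactly how the paper proceeds -- is that for a \emph{simple} closed geodesic the crossing angles cannot be small at all: extend $\lambda$ to a maximal ideal triangulation; a geodesic meeting a leaf at a very small angle must follow that leaf for a long distance, hence enter a spike of an adjacent ideal triangle, i.e.\ a horoball neighborhood of a cusp or a collar of a boundary geodesic (the leaves only exit through cusps or spiral onto the boundary), contradicting Lemma \ref{hyp-lem}. The same confinement to the thick part supplies the lower bound $L$ on the subarcs between consecutive crossings that Lemma \ref{trig} requires, and is what converts its $\cosh$-inequality into a definite additive deficit $C$ per crossing.

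Your fallback compactness argument in the space of projective measured laminations would not close the gap either. The quantity $l_\rho$ has no evident continuous extension to measured laminations, so the ratio is not a function you can pass to the limit on; worse, the genuinely problematic sequences (for instance, images of a fixed curve crossing $\lambda$ under high powers of a Dehn twist about a curve disjoint from $\lambda$, which exist whenever $\lambda$ is not filling) converge projectively to laminations with zero intersection number with $\lambda$, have a bounded number of crossings per period while $l_{j_0}\to\infty$, and since the deficit per crossing is bounded above, their ratio actually tends to $1$; compactness therefore excludes nothing. A correct uniform bound needs the number of crossings to grow linearly in $l_{j_0}(\gamma)$, i.e.\ an upper bound on the lengths of the complementary subarcs $\gamma_j$ -- this is automatic when $\lambda$ is filling (the paper's ``$|\gamma_j|=O(L)$'', since the subarcs cross ideal polygons within their thick parts) and is precisely why the non-filling case receives a separate treatment via strip deformations later in the paper. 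So to repair your argument you should replace the balancing/compactness steps by (i) the angle lower bound via Lemma \ref{hyp-lem} and the extended triangulation, and (ii) explicit two-sided control on the segment lengths, restricting the uniform claim to the setting where such control holds.
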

\begin{proof}
Let $\gamma$ be any simple closed geodesic on $\widehat{S}$.

If the  developing image of $\tilde{\gamma}$  in the equatorial plane in $\HH^3$ does not intersect a pleating line (i.e.\ a leaf of $\lambda$), then it is not affected by the pleating, and hence the $\rho$-length will be the same as the $j_0$-length.

Else, we can decompose $\gamma$ into a  finite union of geodesic arcs $\{\gamma_j\}_{j=1}^N$,
such that each $\gamma_j$ has endpoints on $\lambda$, and has its interior  disjoint from $\lambda$.
Since the ends of leaves of $\lambda$ spiral to the $\partial \widehat{S}$ or exit out of cusps, and $\gamma$ is simple,
by Lemma \ref{hyp-lem}, $\gamma$ does not cross some collar neighborhood of $\partial \widehat{S}$ and a horodisk-neighborhoods around the cusps.

Extend $\lambda$ to a maximal ideal triangulation of $\widehat{S}$. By the observation above, each $\gamma_j$ does not lie near the cusps of the ideal triangles.  Moreover, the intersection of $\gamma$ with each leaf of $\lambda$ (a geodesic side of an ideal triangle) cannot be at an angle close to zero: if it is, $\gamma$ will remain close to the geodesic side of the ideal triangle for a large length, which would force it to lie near the cusp of that ideal triangle, contradicting Lemma \ref{hyp-lem}.

As a result, $\gamma_j$ satisfies the
hypotheses  of Lemma \ref{trig} for some $L,\alpha$ and $\theta$ (which are all independent of the choice of $\gamma$). Note that the length of  $\gamma_j$ is uniformly comparable to $L$. We denote by $|\gamma_j|=O(L)$.

The length of $\gamma$ on $\widehat{S}$ is equal to
$$l_{j_0}(\gamma)=\sum_{j}^N |\gamma_j|.$$

By Lemma \ref{trig}, there exists a $C>0$ such that
$$l_{\rho}(\gamma) < \sum_{j}^N |\gamma_j|-N C.$$ Thus
$$\frac{l_{\rho}(\gamma)}{l_{j_0}(\gamma)}\leq  \frac{\sum_{j}^N |\gamma_j|-N C}{\sum_{j}^N |\gamma_j|}<1-\frac{C}{O(L)}$$
proving the second statement.
\end{proof}

\begin{defn}[Filling arcs] A collection of pairwise-disjoint arcs on $S_{g,k}$ with endpoints at the punctures, such that each arc is homotopically non-trivial and non-peripheral is \textit{filling} if each complementary component is simply-connected. This happens, for instance, if the collection of arcs determines a maximal ideal triangulation.
\end{defn}

In what follows, we shall apply the above definition to the collection of arcs that are the leaves of the geodesic lamination $\lambda$, whose lifts to the universal cover to $\widetilde{S}$ are the pleating lines for the pleated plane $\Psi$ (see \eqref{pplane}). \\

As a consequence of the proof of Proposition \ref{prop1}, we then have:

\begin{cor}\label{cor1} If $\lambda$ is filling,  then $j_0$ strictly dominates $\rho$.
\end{cor}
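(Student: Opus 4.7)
The plan is to reduce Corollary~\ref{cor1} directly to Proposition~\ref{prop1} via a purely topological observation, together with the uniformity already implicit in the length estimate proved there.

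First I would use the hypothesis that $\lambda$ is filling. By definition, viewing the leaves of $\lambda$ as arcs on $S_{g,k}$ with endpoints at the punctures, each complementary component is simply-connected. Hence any essential non-peripheral simple closed curve $\gamma$ on $S_{g,k}$ must have positive geometric intersection number with $\lambda$: otherwise $\gamma$ would be isotopic into a single complementary component, which is simply-connected, forcing $\gamma$ to be null-homotopic or peripheral, contrary to assumption. So every such $\gamma$ satisfies the hypothesis of Proposition~\ref{prop1}.

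Second, I would extract a \emph{uniform} upper bound on the ratio $l_\rho(\gamma)/l_{j_0}(\gamma)$ from the proof of Proposition~\ref{prop1}. Decomposing $\gamma$ into sub-arcs $\gamma_1,\ldots,\gamma_N$ cut out by the leaves of $\lambda$, the Generalized Collar Lemma (Lemma~\ref{hyp-lem}) forces $\gamma$ to avoid a definite horoball neighborhood of each cusp and a definite collar of each geodesic boundary of $\widehat{S}$, so each $\gamma_j$ traverses only the thick part of a complementary ideal polygon of $\lambda$. On the thick part, geodesic segments between boundary sides have length bounded above by some $M=M(\widehat{S},\lambda)$ independent of $\gamma$. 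Combined with the definite length loss $C>0$ at each bending provided by Lemma~\ref{trig}, this yields
\[
\frac{l_\rho(\gamma)}{l_{j_0}(\gamma)}\;\leq\; \frac{NM-NC}{NM}\;=\;1-\frac{C}{M},
\]
a bound strictly less than $1$ and independent of $\gamma$. Taking the supremum over all essential non-peripheral simple closed curves, which by Step~1 all intersect $\lambda$, gives
\[
\sup_{\gamma}\frac{l_\rho(\gamma)}{l_{j_0}(\gamma)}\;\leq\;1-\frac{C}{M}\;<\;1,
\]
which is precisely strict domination of $\rho$ by $j_0$.

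The main point requiring care is the uniformity of $M$: it rests on the geometry of the complementary ideal polygons of $\lambda$ in $\widehat{S}$, together with the Collar Lemma to keep $\gamma$ out of the thin cuspidal and collar regions where such a uniform bound could fail. Once this is in place, everything else reduces either to the topology of the filling condition or to a direct invocation of Proposition~\ref{prop1}.
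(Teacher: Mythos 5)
Your proof is correct and follows the paper's route: the filling hypothesis forces every essential non-peripheral simple closed curve to intersect $\lambda$, and then the uniform bound $\sup_\gamma l_\rho(\gamma)/l_{j_0}(\gamma)<1$ over such curves is exactly the content of Proposition \ref{prop1} (your second step merely re-derives the uniformity already built into that proposition's statement and proof). Nothing further is needed.
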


Thus, in the case when $\lambda$ is filling, and $\rho$ is non-degenerate and non-Fuchsian then $j_0$ is the desired strictly-dominating Fuchsian representation $j$ in Theorem \ref{thm1}.

\subsection*{Non-filling case}
We shall now deal with the case that $\lambda$ is \textit{not} filling (we continue with our assumption that $\rho$ is non-degenerate and non-Fuchsian). Note that the assumption that $\rho$ is non-Fuchsian implies that the geodesic lamination $\lambda \neq \emptyset$. From the proof of Lemma \ref{lem0} (see also Corollary 3.4 of \cite{GupGKM}), the straightened hyperbolic surface $\widehat{S}$ has a  geodesic boundary component for each puncture whose corresponding entry in the tuple $\mathcal{L}$ is positive. Moreover, each puncture that had zero boundary length corresponds to a  (finite-volume) cusp in  $\widehat{S}$.\\

By Proposition \ref{prop1}, the holonomy $j_0$ of the hyperbolic surface $\widehat{S}$ dominates $\rho$; however the $\rho$-length and $j_0$-length  of any simple closed curve that does not intersect $\lambda$, are equal.
In what follows we shall modify $j_0$ to a \textit{strictly} dominating representation $j$. This shall use the operation of (positive)  \textit{strip-deformations}, that we shall define below. This had been used in  the proof of Lemma 3.4 of \cite{Thurston-Minimal} -- see Definition 1.3 of \cite{D-G-K}, and the proof of Lemmas 4.1 of \cite{GKW}.   \\

In what follows, let $\Sigma$ be a hyperbolic surface with at least one geodesic boundary component (and possibly some cusps), and let $\Sigma_c$ be its completion obtained by attaching a funnel to each geodesic boundary component.

\begin{defn}[Strip-deformation]\label{pstrip}  Let $\ell$ be a bi-infinite geodesic line on the complete hyperbolic surface $\Sigma_c$ such that both ends of $\ell$ exit out of funnel ends of $\Sigma_c$ (this could be the same funnel as well).   A \textit{(positive) strip-deformation with parameter $\alpha>0$} is the operation of cutting along such an infinite geodesic line, say $\ell$ and inserting a ``strip", a  hyperbolic region bounded by two disjoint geodesics at a minimum distance $\alpha>0$. See Figure 2.
\end{defn}

\begin{figure}
  \centering
  \includegraphics[scale=.32]{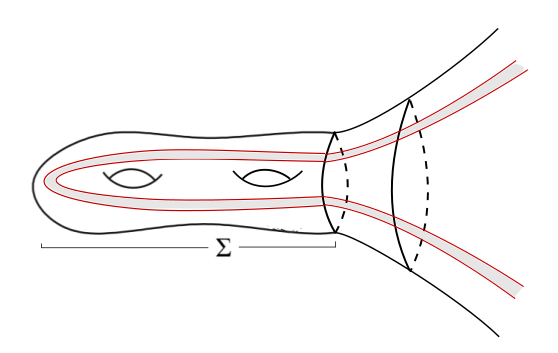}
  \caption{A strip-deformation inserts a hyperbolic strip along a geodesic line embedded in the completion of $\Sigma$. }
\end{figure}

Using positive strip-deformations, one can obtain the following result (see Lemma 4.1 of \cite{GKW}, and \cite{Parlier} for a weaker result):

\begin{prop}\label{inc} Given a hyperbolic surface  $\Sigma$ with non-empty geodesic boundary, there exists a hyperbolic surface $\Sigma^\prime$ homeomorphic to $\Sigma$, such that
\begin{equation}\label{supl2}
\sup\limits_{\gamma} \frac{l_\Sigma(\gamma)}{l_{\Sigma^\prime}(\gamma)} < 1
\end{equation}
where $l_X(\gamma)$ denotes the hyperbolic length of the (geodesic representative of) the curve $\gamma$ on the hyperbolic surface $X$, and $\gamma$ varies over all simple closed curves, including the boundary components.
\end{prop}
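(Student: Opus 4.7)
The plan is to construct $\Sigma'$ from $\Sigma$ by performing simultaneous positive strip deformations along a carefully chosen filling system of geodesic arcs. First, choose geodesic arcs $\{e_1, \ldots, e_m\}$ whose restrictions to $\Sigma$ form an ideal triangulation with vertices on $\partial \Sigma$ or at cusps of $\Sigma$; extend each arc into the adjacent funnels of $\Sigma_c$ to obtain bi-infinite geodesic lines, arranging that at least one arc-endpoint lies on each boundary component of $\Sigma$. Fix a small parameter $\alpha > 0$ and apply positive strip deformations (Definition \ref{pstrip}) of common parameter $\alpha$ along each $e_i$ simultaneously; the result is a hyperbolic surface $\Sigma'$ homeomorphic to $\Sigma$.

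The main tool is a natural $1$-Lipschitz retraction $\pi : \Sigma'_c \to \Sigma_c$ which is the identity off the inserted strips and which collapses each strip onto its central arc $e_i$ via orthogonal projection to the underlying central geodesic. For any essential simple closed curve $\gamma$ on $\Sigma$, its geodesic representative $\gamma^*$ in $\Sigma'$ maps under $\pi$ to a closed curve in $\Sigma$ freely homotopic to $\gamma$, so $l_\Sigma(\gamma) \leq \mathrm{length}(\pi(\gamma^*))$. A direct calculation in Fermi coordinates shows that each traversal of a strip by $\gamma^*$ contributes at least $\alpha$ to its length in $\Sigma'$ while being collapsed in the transverse direction under $\pi$, yielding
$$ l_{\Sigma'}(\gamma) \;\geq\; l_\Sigma(\gamma) + \alpha \cdot k(\gamma), $$
where $k(\gamma)$ equals the geometric intersection number $i(\gamma, \bigcup_i e_i)$ on $\Sigma$.

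The crux of the argument is a linear lower bound on $k(\gamma)$ in terms of $l_\Sigma(\gamma)$. Each complementary triangle of the arc system has uniformly bounded diameter outside a fixed horoball neighborhood of each cusp of $\Sigma$. By Lemma \ref{hyp-lem}, any non-peripheral simple closed geodesic $\gamma$ stays a uniform distance from the cusps, so its intersection with each complementary triangle is a geodesic segment of length at most some $D>0$, whence $k(\gamma) \geq l_\Sigma(\gamma)/D$. For each of the finitely many boundary components, the arrangement in the first paragraph gives $k(\partial_j) \geq 1$, and hence each ratio $l_\Sigma(\partial_j)/l_{\Sigma'}(\partial_j) \leq l_\Sigma(\partial_j)/(l_\Sigma(\partial_j) + \alpha)$ is strictly less than $1$ uniformly.

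Combining these, one obtains $l_\Sigma(\gamma)/l_{\Sigma'}(\gamma) \leq 1/(1 + \alpha/D) < 1$ for every non-peripheral essential simple closed curve, together with a separate uniform bound strictly less than $1$ over the finitely many boundary components; taking the supremum establishes \eqref{supl2}. The main obstacle is justifying the linear lower bound on $k(\gamma)$ uniformly: a priori a simple geodesic could spend arbitrary length in a single complementary triangle if that triangle has an ideal cuspidal vertex, and the generalised collar lemma must be invoked carefully to rule this out.
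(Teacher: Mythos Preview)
Your argument is correct and matches the paper's own (sketched) proof: both perform positive strip-deformations along a filling collection of arcs on $\Sigma_c$ and use that every simple closed curve gains length at least $\alpha$ per crossing, yielding a uniform multiplicative increase via the linear bound $k(\gamma)\gtrsim l_\Sigma(\gamma)$ (the paper phrases this last step by referring back to the proof of Proposition~\ref{prop1}). One minor inconsistency to fix: Definition~\ref{pstrip} requires both ends of each deformation line to exit a funnel, so you should take all arc-endpoints on $\partial\Sigma$ rather than at cusps; since $\partial\Sigma\neq\emptyset$ such a filling arc system always exists, and the rest of your argument goes through unchanged.
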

\begin{proof}[Sketch of the proof] As before, extend $\Sigma$ to a complete hyperbolic surface $\Sigma_c$  by adding funnels to each geodesic boundary component. Choose a collection $\mathcal{L}$ of pairwise-disjoint embedded bi-infinite geodesic lines on $\Sigma_c$ that are filling, and perform a positive strip-deformation (with some positive parameter) on each. The surface $\Sigma^\prime$ is obtained by excising the funnels of the resulting complete hyperbolic surface. The length of a simple closed curve  increases by at least  the width of the added strip each time  it crosses an arc in $\mathcal{L}$. Since any simple closed curve, including the boundary components, intersects $\mathcal{L}$, its length in $\Sigma^\prime$ increases by a definite factor, exactly as in the proof of Proposition \ref{prop1}.
\end{proof}

\textit{Remarks}. 1. The same proof, with \textit{negative} strip-deformations, allows one to shorten lengths of all simple closed curves on a hyperbolic surface with boundary. For details, see Lemma 4.4 of \cite {GKW}, or \cite{P-T} (where the operation is called ``peeling a strip").

2. In the construction outlined in the proof of Proposition \ref{inc}, the length of some simple geodesic arc between boundary component(s) of $\Sigma$ would necessarily decrease. This is because  by doubling $\Sigma$ across its geodesic boundaries we would obtain a closed surface, and we know that we cannot lengthen all simple closed geodesics on a closed hyperbolic surface (\textit{c.f.} Proposition 2.1 of \cite{Thurston-Minimal}). In fact, in the proof of Proposition \ref{main-prop} below, we shall see that for sufficiently small strip-deformations on $\Sigma$, the lengths of simple geodesic arcs between boundary component(s) of $\Sigma$ decrease by a \textit{uniformly} small amount i.e.\ by a multiplicative factor that is bounded below away from $0$.  \\

We shall apply strip-deformations and Proposition 3.3 to various subsurfaces of $\widehat{S}$ in the proof of the following:

\begin{prop}\label{main-prop} There is a hyperbolic surface $\widehat{S}_t$ with the underlying topological surface $S_{g,k}$, such that
\begin{itemize}
\item[(A)] like $\widehat{S}$,  the $i$-th puncture is a cusp if the corresponding entry of $\mathcal{L} = (l_1,l_2,\ldots, l_k)$ is zero, and a geodesic boundary component of length $l_i$ otherwise, and
\item[(B)] the lengths of simple closed geodesics on  $\widehat{S}_t$ are greater than the corresponding geodesics on $\widehat{S}$ in a way that
\begin{equation}\label{supl3}
\sup\limits_{\gamma} \frac{l_{\rho}(\gamma)}{l_{\widehat{S}_t}(\gamma)} < 1
\end{equation}
where $\gamma$ varies over all non-peripheral simple closed curves on $S_{g,k}$.
\end{itemize}
\end{prop}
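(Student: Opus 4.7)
The plan is to construct $\widehat S_t$ from $\widehat S$ by performing a positive strip-deformation supported inside the non-filling region of $\lambda$, chosen so that $\partial\widehat S$ is left intact. The key point is that the simple closed curves disjoint from $\lambda$, on which $j_0$ fails to strictly dominate, lie in a definite subsurface, and strip-deforming only that subsurface will strictly lengthen those curves without disturbing the already-established strict domination on curves crossing $\lambda$ given by Proposition \ref{prop1}.

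More precisely, let $\Sigma_0\subset\widehat S$ be the union of the non-simply-connected components of $\widehat S\setminus\mathrm{supp}(\lambda)$. Every non-peripheral simple closed curve on $S_{g,k}$ that is disjoint from $\lambda$ is isotopic into $\Sigma_0$. Because each geodesic boundary component of $\widehat S$ has a leaf of $\lambda$ spiralling onto it, the boundary of each component of $\Sigma_0$ inside $\widehat S$ consists of leaves of $\lambda$, with $\partial\widehat S$ appearing only asymptotically. I choose a finite pairwise-disjoint collection $\mathcal A$ of geodesic arcs in the interior of $\Sigma_0$, each with both endpoints lying on leaves of $\lambda$, so that $\mathcal A$ together with the boundary leaves fills each non-simply-connected component of $\Sigma_0$, in the sense that every non-peripheral simple closed geodesic in $\Sigma_0$ meets some arc of $\mathcal A$; concretely, one extends $\lambda\cap\Sigma_0$ to a maximal ideal triangulation of $\Sigma_0$ and takes $\mathcal A$ to be the new arcs. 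Performing a positive strip-deformation of small parameter $t>0$ along each arc of $\mathcal A$ then produces a hyperbolic surface $\widehat S_t$ homeomorphic to $S_{g,k}$. Since every arc of $\mathcal A$ lies in the interior of $\widehat S$, the deformation is supported away from $\partial\widehat S$, so the cusp neighbourhoods and geodesic boundaries of $\widehat S_t$ match those of $\widehat S$ and of $\mathcal L$; this gives condition (A).

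For condition (B), let $\gamma$ be any non-peripheral simple closed curve on $S_{g,k}$. If $\gamma$ crosses $\lambda$, Proposition \ref{prop1} provides a uniform bound of the form $l_\rho(\gamma)/l_{j_0}(\gamma)\leq 1-C/L'$; since positive strip-deformations do not shorten simple closed geodesics, we have $l_{\widehat S_t}(\gamma)\geq l_{j_0}(\gamma)$, and the same bound carries over with $l_{\widehat S_t}$ in the denominator. If instead $\gamma$ avoids $\lambda$, then $\gamma$ lies in $\Sigma_0$ and must cross some arc of $\mathcal A$; a compactness argument essentially identical to the one in the proof of Proposition \ref{prop1} (applied with $\mathcal A$ playing the role of the pleating locus on $\Sigma_0$, and using Lemma \ref{hyp-lem} inside $\Sigma_0$ to bound the intersection angles away from zero) produces a constant $c>0$ with $\#(\gamma\cap\mathcal A)\geq c\cdot l_{\widehat S}(\gamma)$, so that $l_{\widehat S_t}(\gamma)\geq(1+ct)\,l_{\widehat S}(\gamma)=(1+ct)\,l_\rho(\gamma)$ and hence $l_\rho(\gamma)/l_{\widehat S_t}(\gamma)\leq 1/(1+ct)<1$. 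Taking the supremum over both cases yields the uniform bound \eqref{supl3}.

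The main obstacle is arranging the arcs of $\mathcal A$ so that they stay strictly in the interior of $\widehat S$ with endpoints only on leaves of $\lambda$: in a component of $\Sigma_0$ whose closure abuts a segment of $\partial\widehat S$ lying between two consecutive spiralling endpoints of $\lambda$-leaves, one must route the filling arcs across leaves that spiral very close to $\partial\widehat S$ rather than through the boundary segment, and verify that the corresponding strip-deformation still assembles into a consistent hyperbolic structure on $\widehat S_t$. Here the remark following Proposition \ref{inc}, which bounds by a uniform multiplicative factor the shortening of transverse simple geodesic arcs under small strip-deformations, is used to ensure that for $t$ sufficiently small the construction is well-defined and all of the above length inequalities hold simultaneously.
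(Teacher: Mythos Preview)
Your overall intuition---strip-deform inside the non-filling part of $\lambda$ so as to lengthen the curves that Proposition~\ref{prop1} misses---is correct, but the construction as written has a genuine gap.

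The problem is that strip-deformations (Definition~\ref{pstrip}) are defined only along bi-infinite geodesic lines both of whose ends exit \emph{funnel} ends of a complete surface. Your description of $\mathcal{A}$ is internally inconsistent: you first say the arcs are finite with endpoints on leaves of $\lambda$, then say ``concretely'' they are the new edges of an ideal triangulation of the crowned surface $\Sigma_0$. Neither version works. Along a finite arc there is no strip-deformation. And the edges of an ideal triangulation of $\Sigma_0$ are bi-infinite geodesics asymptotic to the crown cusps of $\Sigma_0$; but those crown cusps lie on $\partial\widehat{S}$ (they are exactly the points where consecutive $\lambda$-leaves accumulate on a boundary geodesic or cusp of $\widehat{S}$). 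So these arcs spiral onto $\partial\widehat{S}$ rather than exit a funnel, and a strip-deformation along them is either undefined or, if forced in the completion $\widehat{S}_c$, will change the boundary lengths---contradicting your claim that the deformation is ``supported away from $\partial\widehat{S}$''. Your final paragraph acknowledges this obstacle but does not resolve it; invoking the remark after Proposition~\ref{inc} controls how much transverse arc-lengths \emph{decrease}, not the well-definedness of the deformation itself.

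The paper's proof avoids this by first passing to the \emph{convex core} $\Sigma_i$ of each non-simply-connected complementary component (a compact subsurface with closed geodesic boundary, or a single geodesic $\sigma_j$ in the annular case), and letting $R$ be the complement of the $\Sigma_i$'s and $\sigma_j$'s in $\widehat{S}$. Then $R$ carries all the original cusps and boundaries of $\widehat{S}$ as well as new ``positive'' boundary components coming from $\partial\Sigma_i$ and $\sigma_j$. The strip-deformations on $R$ are performed along arcs between \emph{positive} boundaries only (hence the original $\partial\widehat{S}$ is untouched), while each $\Sigma_i$ is lengthened via Proposition~\ref{inc}; matching the boundary lengths and regluing gives $\widehat{S}_t$. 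The passage to convex cores, and the resulting cut--deform--reglue along \emph{closed} geodesics rather than along the crowned boundary of $\Sigma_0$, is precisely the missing ingredient that makes the deformation well-defined while keeping $\partial\widehat{S}$ fixed.
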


\begin{proof}
In what follows, the surface  $\widehat{S}_t$  will be constructed by a suitable deformation of $\widehat{S}$, involving the complementary components of $\lambda$ on $\widehat{S}$ that are not simply-connected. (Recall that such components exists since $\lambda$ is not filling.) \\

Namely, let $\Sigma^\circ$ be a connected component of $\widehat{S} \setminus \lambda$ that is not simply-connected. Then the metric completion of $\Sigma^\circ$ is a ``crowned" hyperbolic surface. Recall that a \textit{crowned hyperbolic surface} is a hyperbolic structure on a punctured surface $S$, such that each puncture corresponds to a ``crown end", bounded by chain of bi-infinite geodesic lines arranged in a cyclic order, such that the positive half-ray of each line and the negative half-ray of the next are asymptotic.

The assumption that  $\Sigma^\circ$  is not simply-connected implies that it is not an ideal polygon. In this case, there is a geodesic representative of the loop around each crown end, and these collection of loops typically bound an embedded hyperbolic surface with geodesic boundary (the \textit{convex core} of $\Sigma^\circ$), that we denote by $\Sigma$.  (See Figure 3.) The exceptional case is when $\Sigma^\circ$ is topologically an annulus, with exactly two crown ends, in which case the convex core  $\Sigma$ is a single simple closed geodesic homotopic to the loop around either end. \\

In this way, we obtain a collection of hyperbolic surfaces with geodesic boundary that we denote by $\Sigma_1, \Sigma_2,\ldots , \Sigma_l$  and possibly some simple closed geodesics (as in the exceptional case), that we denote by  $\sigma_1,\sigma_2,\ldots \sigma_m$. Here each $\Sigma_i$ is connected and embedded in $\widehat{S}$, and the $\Sigma_i$s and $\sigma_j$s are all pairwise disjoint.
Consider now the (possibly disconnected) hyperbolic surface
\begin{equation}\label{surfR}
R= \widehat{S} \setminus \left(\Sigma_1 \cup \Sigma_2 \cup \cdots \cup \Sigma_l \cup \sigma_1 \cup \sigma_2 \cup \cdots \cup \sigma_l\right).
\end{equation}
 Note that this surface contains all the cusps and/or geodesic boundary components of $\widehat{S}$.  We call the geodesic boundary components of $R$ that arise from the boundary components of the $\Sigma_i$s and the $\sigma_j$s to be the \textit{positive} boundary components.\\

 \begin{figure}
  \centering
  \includegraphics[scale=.32]{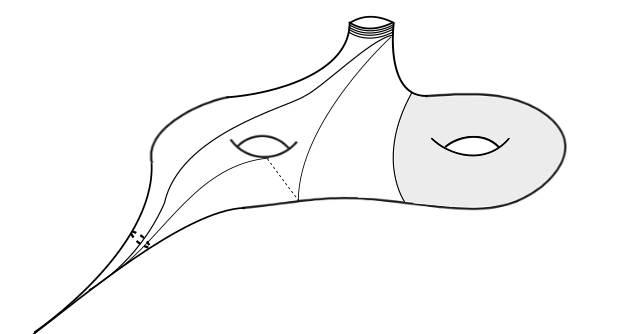}
  \caption{The hyperbolic surface $\widehat{S}$ with a subsurface $\Sigma$ (shown shaded) in the complement of $\lambda$. }
\end{figure}

We shall now modify $R$ to a new (topologically identical) hyperbolic surface $R_t$ by the following operation: choose a collection of pairwise-disjoint non-peripheral geodesic arcs between the \textit{positive} boundary components of $R$, such that each positive boundary component has a least one incident arc, and each arc is perpendicular to the positive boundary components at its endpoints. (The shortest arc in a non-trivial homotopy class between boundary components would have the last property.)  Let $R_c$ be the completion of $R$ obtained by attaching hyperbolic funnels to each of the positive boundary components.  These arcs can now be extended to bi-infinite geodesic lines in $R_c$ that are still pairwise-disjoint (if they intersect, we would obtain a hyperbolic triangle with two right angles, bounded by segments of the two intersecting lines and an arc in a positive boundary component, which is impossible). Hence, we can perform a positive strip deformation with parameter $t>0$ on each of these geodesic lines, to obtain a new hyperbolic surface. $R_t$ is the surface obtained when we remove the funnels from this new surface. By construction, the lengths of all the positive boundary components of $R$ have increased;  we continue to call these the positive boundary components of $R_t$. Note that the lengths of the other boundary components of $R$ are not affected in this deformation. The parameter $t$ in the above discussion will be chosen in the forthcoming discussion (see the Claim below).  \\

First, by applying Proposition \ref{inc} to each $\Sigma_i$, we obtain a topologically identical hyperbolic surface $\Sigma_i^\prime$ such that the lengths of all simple closed geodesics in $\Sigma_i^\prime$, including the geodesic boundary components, are greater (by a multiplicative factor greater than $1$) than the corresponding lengths on $\Sigma_i$. Moreover, we can choose the positive parameters on the set of filling arcs in the proof of Proposition \ref{inc}, to  make sure that the lengths of the boundary components of each $\Sigma_i^\prime$  match with the lengths of the corresponding positive boundary components of $R_t$. Then we obtain a new hyperbolic surface $\widehat{S}_t$ by gluing (a) each $\Sigma_i^\prime$ to $R_t$, and (b) pairs of boundary components of $R_t$ corresponding to the $\sigma_j$s, exactly as dictated by the identification of the boundaries of $\Sigma_i$s and the $\sigma_j$s  on $\widehat{S}$ (see \eqref{surfR}). (In particular, we keep the twists parameters of each gluing exactly the same as we see on $\widehat{S}$.)  \\

By our construction, the surface $\widehat{S}_t$ satisfies the requirement (A) in the statement of the Proposition, since the deformations described above do not affect the cusps and geodesic boundary components of $\widehat{S}$. We shall now show that we can choose the parameter $t$ so that the requirement (B) of the Proposition is also satisfied.

Let $c$ be a simple closed curve on $\widehat{S}$, that is a boundary component of some $\Sigma_i$, or one of the $\sigma_j$s. Note that $c$ has an annular neighborhood in $\widehat{S}$,  that is bounded by $c$ on one side, and a collection of leaves of $\lambda$ on the other. (The latter is the crown end of the complementary component of $\lambda$  that $c$ is contained in.) This implies that any simple closed geodesic $\gamma$ on $\widehat{S}$ that intersects $c$ essentially, must necessarily intersect $\lambda$.  Indeed, $\gamma$ must intersect $\lambda$ at least as many times as it intersects $\partial \Sigma_i$.  Thus, by Proposition \ref{prop1}, we know that
\begin{equation}\label{supl4}
\sup\limits_{\gamma} \frac{l_{\rho}(\gamma)}{l_{\widehat{S}}(\gamma)} = \beta < 1
\end{equation}
when $\gamma$ varies over all  simple closed geodesics that are not completely contained in one of the embedded $\Sigma_i$s, and is not one of the $\sigma_j$s.

The key observation now is that the positive strip-deformations resulting in $\widehat{S}_t$ changes the fundamental domain for $\widehat{S}$  to that of $\widehat{S}_t$ continuously, as a function of $t$. As a consequence, we derive the following claim (see  Remark (2) after Proposition \ref{inc}):\\

\noindent\textit{Claim.} \textit{For any sufficiently small $t>0$, we have}
\begin{equation}\label{supl5}
\sup\limits_{\gamma} \frac{l_{\widehat{S}}(\gamma)}{l_{\widehat{S}_t}(\gamma)} <  1/\beta
\end{equation}
\textit{when $\gamma$ varies over all  simple closed curves that are not completely contained in one of the embedded $\Sigma_i$s,  and are not the $\sigma_j$s. (Here $\beta$ is the constant less than $1$ obtained in \eqref{supl4}.)}\\
\textit{Proof of claim.}
Any simple closed geodesic $\gamma$ on $\widehat{S}$ as above can be decomposed into a finite union of geodesic arcs $\{\gamma_j\}_{j=1}^N$ such that each $\gamma_j$ is a either a geodesic arc between a boundary component of a $\Sigma_i$ (for some $1\leq i\leq l$) to itself, or a geodesic arc with endpoints on two (or possibly the same) positive boundary components of $R$. In either case, the length of each $\gamma_j$, denoted $|\gamma_j|$, is bounded below by some $L>0$, that depends on the hyperbolic surface $\widehat{S}$ and its subsurfaces $\Sigma_i$s and $R$. Since the fundamental domains for $\widehat{S}_t$  and its corresponding subsurfaces in $\mathbb{H}^2$ change continuously as we increase $t$ from $0$, the length of each $\gamma_j$ changes continuously. Thus, for any choice of $c>0$, we can choose $t$ small enough such that this difference of lengths is bounded by $c$ (for each $j$).
Then we have $$l_{\widehat{S}_t}(\gamma) > \sum_{j=1}^N |\gamma_j| - Nc.$$
and consequently
$$\frac{l_{\widehat{S}}(\gamma)}{l_{\widehat{S}_t}(\gamma)}\leq  \frac{\sum_{j=1}^N |\gamma_j|}{\sum_{j=1}^N |\gamma_j|-Nc}<1 +\frac{Nc}{\sum_{j=1}^N |\gamma_j|} < 1 + \frac{c}{L}.$$
To obtain \eqref{supl5}, we choose $c$ such that the right hand side above is equal to $1/\beta$. $\qed$.\\

Combining \eqref{supl4} and \eqref{supl5}, we see that the inequality \eqref{supl3} in requirement (B) of the Proposition holds when the supremum is taken over all simple closed curves $\gamma$ that are not completely contained in one of the embedded $\Sigma_i$s, and is not one of the $\sigma_j$s.

However, if $\gamma$ is a simple closed curve contained entirely in one of the $\Sigma_i$s, or is one of the $\sigma_j$s,  then $l_\rho(\gamma)$ = $l_{\widehat{S}}(\gamma)$ since $\gamma$ is disjoint from $\lambda$.  Also, the length of $\gamma$ on $\widehat{S}_t$ is equal to the length of $\gamma$ on the embedded subsurface $\Sigma_i^\prime$. Hence by Proposition \ref{inc}, we obtain
\begin{equation*}
\sup\limits_{\gamma} \frac{l_{\rho}(\gamma)}{l_{\widehat{S}_t}(\gamma)} < 1
\end{equation*}
when the supremum is taken over all such simple closed curves.

Thus, \eqref{supl3} holds when the supremum is taken over all simple closed curves on $S_{g,k}$, and requirement (B) is satisfied.  \end{proof}

\medskip
As a consequence, the holonomy $j:\Pi \to \pslr$ of the hyperbolic surface  $\widehat{S}_t$ (obtained in Proposition \ref{main-prop}) strictly dominates the non-degenerate and non-Fuchsian representation $\rho$ we started with, in the  beginning of the section.

\subsection{Case that $\rho$ is degenerate}

We now handle the remaining case when $\rho$ is a degenerate representation. Recall from Definition \ref{degen} that there are two possibilities (a) and (b), where the image has exactly one and two global fixed points on $\cp$, respectively.

In the case that (a) in Definition \ref{degen} holds,  the monodromy around each puncture of $S_{g,k}$ is parabolic, and the representation  $\rho$ lies in the relative character variety $\text{Hom}(\Pi,  \mathcal{L})$ where $\mathcal{L} = (0,0,\ldots ,0)$. Then, let $j:\Pi \to \pslr$ be any Fuchsian representation such that each of the $k$ punctures is a cusp. Then $j \in \text{Hom}(\Pi,  \mathcal{L})$, and strictly dominates $\rho$; indeed, the left hand side of \eqref{supl} then equals zero. 

Finally, in the case that $\rho$ is degenerate and co-axial (i.e.\  (b) in Definition \ref{degen} holds), then  the image of $\rho$ 
preserves the geodesic line $\ell$ in $\mathbb{H}^3$,  which by a  conjugation can be assumed to be to a geodesic line in the equatorial plane $\mathbb{H}^2$. If we identify $\ell$ with $\mathbb{R}$, via an isometry $\Psi$,  then each element in the image of $\rho$ acts by a half-translation along $\mathbb{R}$ (i.e\ has the form $x\mapsto \pm x+c$) . Thus, for each $\gamma \in \Pi$,  there is a real number $m(\rho(\gamma)) = \Psi(\rho(\gamma)\cdot x) - \Psi(x)$, that is well-defined, i.e.\ independent of $x$. Moreover, these satisfy:  (i) $l_\rho(\gamma) = \lvert m(\rho(\gamma)) \rvert$ and (ii) $m \circ \rho :\Pi \to \mathbb{R}$  is a homomorphism. 
Then, as in \cite{DerThol}, this homomorphism to $\mathbb{R}$  can be considered as a defining a virtually-abelian representation  $\rho^\prime:\Pi \to \pslr$ that preserves a geodesic line $\ell$ in $\mathbb{H}^2$, and acts by translations along it. Note that the translation distance of $\rho^\prime(\gamma)$ along $\mathbb{R}$  is exactly $l_\rho(\gamma)$, for each $\gamma \in \Pi$.

Thus, it suffices to find a Fuchsian representation $j$ that is strictly dominates $\rho^\prime$ in the sense of \eqref{supl}.  To do this, we can apply the techniques of either  \cite{GKW} or \cite{Sagman}. We thank Nathaniel Sagman for the following sketch of the latter approach:

Let $\mathcal{L} = (l_1,l_2,\ldots,l_k)$ be the $\rho^\prime$-lengths of the loops around the punctures of $S$. 
Recall that we had chosen a hyperbolic metric of finite volume on $S= S_{g,k}$ in \S2. By Theorem 1.1 of \cite{Sagman}, there is a $\rho^\prime$-equivariant map $f:\widetilde{S} \to \mathbb{H}^2$ with image $\ell$ such that the Hopf differential $\text{Hopf}(f)$ on $S$ has a pole at the $i$-th puncture of order at most one  if $l_i=0$, and of order two if $l_i>0$, with a real residue determined by $l_i$. Moreover, by Theorem 1.4 of \cite{Sagman} there is a hyperbolic surface $S^\prime$ of finite volume that is homeomorphic to $S$, with boundary-lengths given by $\mathcal{L}$, and a harmonic map $h:S\to S^\prime$ such that $\text{Hopf}(h) = \text{Hopf}(f)$.

 By Proposition 3.13 of \cite{Sagman},  the energy densities satisfy $e(f) < e(h)$ pointwise, everywhere on $S$. Moreover, as one approaches a puncture of $S$, the ratio $e(f)/e(h) \to 1$, if the monodromy around it is parabolic or hyperbolic (by Theorem 1.1 of \cite{Sagman} and Proposition 3.13 of \cite{WolfInf}) and $e(f)/e(h) \to 0$, if the monodromy around the puncture is elliptic (by Proposition 6.1 of \cite{Sagman}). Then, if $j$ is the Fuchsian holonomy of $S^\prime$, the $(j,\rho)$-equivariant map $f\circ \tilde{h^{-1}}:\widetilde{h}(\mathbb{H}^2) \to \mathbb{H}^2$ is strictly $1$-Lipschitz on any compact subset of  $S^\prime$. Since any simple closed geodesic on $S^\prime$ lies in a compact subset of $S^\prime$ by Lemma \ref{hyp-lem}, it follows that $j$ strictly dominates $\rho$ in the simple length spectrum, as in \eqref{supl}. 

This concludes the proof of Theorem \ref{thm1}.

\bibliographystyle{amsalpha}
\bibliography{mrefs}

\providecommand{\bysame}{\leavevmode\hbox to3em{\hrulefill}\thinspace}
\providecommand{\MR}{\relax\ifhmode\unskip\space\fi MR }
\providecommand{\MRhref}[2]{%
  \href{http://www.ams.org/mathscinet-getitem?mr=#1}{#2}
}
\providecommand{\href}[2]{#2}
\begin{thebibliography}{DMSV19}

\bibitem[AB]{AllBrid}
Dylan Allegretti and Tom Bridgeland, \emph{The {m}onodromy of meromorphic
  {p}rojective {s}tructures}, {\it preprint, arXiv:1802.02505}.

\bibitem[DGK16]{D-G-K}
Jeffrey Danciger, Fran\c{c}ois Gu\'{e}ritaud, and Fanny Kassel, \emph{Margulis
  spacetimes via the arc complex}, Invent. Math. \textbf{204} (2016), no.~1,
  133--193.

\bibitem[DL19]{DaiLi}
Song Dai and Qiongling Li, \emph{Minimal surfaces for {H}itchin
  representations}, J. Differential Geom. \textbf{112} (2019), no.~1, 47--77.

\bibitem[DMSV19]{DMSV}
Georgios Daskalopoulos, Chikako Mese, Andrew Sanders, and Alina Vdovina,
  \emph{Surface groups acting on {${\rm CAT} (-1)$} spaces}, Ergodic Theory
  Dynam. Systems \textbf{39} (2019), no.~7, 1843--1856.

\bibitem[DT16]{DerThol}
Bertrand Deroin and Nicolas Tholozan, \emph{Dominating surface group
  representations by {F}uchsian ones}, Int. Math. Res. Not. IMRN (2016),
  no.~13, 4145--4166.

\bibitem[FG06]{FG}
Vladimir Fock and Alexander Goncharov, \emph{Moduli spaces of local systems and
  higher {T}eichm\"{u}ller theory}, Publ. Math. Inst. Hautes \'{E}tudes Sci.
  (2006), no.~103, 1--211.

\bibitem[GK17]{GK}
Fran\c{c}ois Gu\'{e}ritaud and Fanny Kassel, \emph{Maximally stretched
  laminations on geometrically finite hyperbolic manifolds}, Geom. Topol.
  \textbf{21} (2017), no.~2, 693--840.

\bibitem[GKW15]{GKW}
Fran\c{c}ois Gu\'{e}ritaud, Fanny Kassel, and Maxime Wolff, \emph{Compact
  anti--de {S}itter 3-manifolds and folded hyperbolic structures on surfaces},
  Pacific J. Math. \textbf{275} (2015), no.~2, 325--359.

\bibitem[GM20]{GM2}
Subhojoy Gupta and Mahan Mj, \emph{Monodromy representations of meromorphic
  projective structures}, Proc. Amer. Math. Soc. \textbf{148} (2020),
  2069--2078.

\bibitem[Gup]{GupGKM}
Subhojoy Gupta, \emph{{M}onodromy groups of
  $\mathbb{C}\mathrm{P}^1$-{s}tructures on punctured surfaces}, {\it preprint,
  arXiv:1909.10771}.

\bibitem[Mar07]{Marden}
A.~Marden, \emph{Outer circles}, Cambridge University Press, Cambridge, 2007,
  An introduction to hyperbolic 3-manifolds.

\bibitem[Mat76]{Matelski}
J.~Peter Matelski, \emph{A compactness theorem for {F}uchsian groups of the
  second kind}, Duke Math. J. \textbf{43} (1976), no.~4, 829--840.

\bibitem[Par05]{Parlier}
Hugo Parlier, \emph{Lengths of geodesics on {R}iemann surfaces with boundary},
  Ann. Acad. Sci. Fenn. Math. \textbf{30} (2005), no.~2, 227--236.

\bibitem[PT10]{P-T}
Athanase Papadopoulos and Guillaume Th\'{e}ret, \emph{Shortening all the simple
  closed geodesics on surfaces with boundary}, Proc. Amer. Math. Soc.
  \textbf{138} (2010), no.~5, 1775--1784.

\bibitem[Sag]{Sagman}
Nathaniel Sagman, \emph{Infinite energy equivariant harmonic maps, domination,
  and anti-de sitter 3-manifolds}, {\it preprint, arXiv:1911.06937}.

\bibitem[Thu]{Thurston-Minimal}
William~P. Thurston, \emph{Minimal stretch maps between hyperbolic surfaces},
  {\it unpublished, arXiv:9801039}.

\bibitem[Wha]{Whang}
Junho~Peter Whang, \emph{Nonlinear descent on moduli of local systems}, {\it
  preprint, arXiv:1710.01848}.

\bibitem[Wol91]{WolfInf}
Michael Wolf, \emph{Infinite energy harmonic maps and degeneration of
  hyperbolic surfaces in moduli space}, J. Differential Geom. \textbf{33}
  (1991), no.~2, 487--539.

\bibitem[Wol08]{Wolpert}
Scott~A. Wolpert, \emph{Behavior of geodesic-length functions on
  {T}eichm\"{u}ller space}, J. Differential Geom. \textbf{79} (2008), no.~2,
  277--334.

\end{thebibliography}

\end{document}